\newtheorem{teo}{Theorem}[section]
\theoremstyle{definition}
\def\ep{\varepsilon}
\def\a{\alpha}
\def\R{\mathbb R}
\begin{document}
\title[Asymptotics for inhomogenous equations with memory. Large dimensions]{Decay/growth rates for inhomogeneous heat equations with memory. The case of large dimensions}

\author[C. Cort\'{a}zar,  F. Quir\'{o}s \and N. Wolanski]{Carmen Cort\'{a}zar,  Fernando  Quir\'{o}s, \and Noem\'{\i} Wolanski}

\address{Carmen Cort\'{a}zar\hfill\break\indent
Departamento  de Matem\'{a}tica, Pontificia Universidad Cat\'{o}lica
de Chile \hfill\break\indent Santiago, Chile.} \email{{\tt
ccortaza@mat.puc.cl} }

\address{Fernando Quir\'{o}s\hfill\break\indent
Departamento  de Matem\'{a}ticas, Universidad Aut\'{o}noma de Madrid,
\hfill\break\indent 28049-Madrid, Spain,
\hfill\break\indent and Instituto de Ciencias Matem\'aticas ICMAT (CSIC-UAM-UCM-UC3M),
\hfill\break\indent 28049-Madrid, Spain.} \email{{\tt
fernando.quiros@uam.es} }

\address{Noem\'{\i} Wolanski \hfill\break\indent
IMAS-UBA-CONICET, \hfill\break\indent Ciudad Universitaria, Pab. I,\hfill\break\indent
(1428) Buenos Aires, Argentina.} \email{{\tt wolanski@dm.uba.ar} }

\keywords{Heat equation with nonlocal time derivative, Caputo derivative, asymptotic behavior.}

\subjclass[2010]{%
35B40, % Asymptotic behavior of solutions
35R11, % Fractional partial differential equations
35R09, %Integro-partial differential equations
45K05. %Integro-partial differential equations
}

\date{}

\begin{abstract}
We study the decay/growth rates in all  $L^p$ norms of solutions to  an inhomogeneous nonlocal heat equation in $\mathbb{R}^N$ involving a Caputo  $\alpha$-time derivative and a power $\beta$ of the Laplacian when the dimension is large, $N> 4\beta$. Rates depend strongly on the space-time scale and on the time behavior of the spatial $L^1$ norm  of the forcing term.
\end{abstract}

\maketitle

\begin{center}
	\emph{Dedicated to the memory of our good friend Ireneo Peral, whose enthusiasm for mathematics and life will always be a model for us.}
\end{center}

\section{Introduction and main results}
 \setcounter{equation}{0}

\subsection{Goal}
This paper is part of a project intending to give a precise description (decay/growth rates and profiles)  of the large-time behavior of solutions to the Cauchy  problem
\begin{equation}
\label{eq:fully.nl}
\partial_t^\a u+(-\Delta)^\beta u=f\quad\mbox{in }Q:=\R^N\times(0,\infty),\qquad
 	u(\cdot,0)=u_0\quad\mbox{in }\R^N,
\end{equation}
where $u_0$ and $f(\cdot,t)$ belong to $L^1(\mathbb{R}^N)$. Here $\partial_t^\alpha$, $\alpha\in(0,1)$, denotes the Caputo $\alpha$-derivative, introduced in~\cite{Caputo-1967}, defined for smooth functions by
$$
\displaystyle\partial_t^\alpha u(x,t)=\frac1{\Gamma(1-\alpha)}\,\partial_t\int_0^t\frac{u(x,\tau)- u(x,0)}{(t-\tau)^{\alpha}}\, {\rm d}\tau,
$$
and $(-\Delta)^\beta$, $\beta\in(0,1]$,  is the usual $\beta$ power of the Laplacian, defined for smooth functions by
$(-\Delta)^{s}=\mathcal{F}^{-1}(|\cdot|^{2s}\mathcal{F})$, where $\mathcal{F}$ stands for Fourier transform; see for instance~\cite{Stein-book-1970}.

\emph{Fully nonlocal} heat equations like~\eqref{eq:fully.nl},  nonlocal both in space and time, are useful to model situations with long-range interactions and memory effects, and have been proposed for example to describe plasma transport~\cite{delCastilloNegrete-Carreras-Lynch-2004,delCastilloNegrete-Carreras-Lynch-2005}; see also~\cite{Cartea-delCastilloNegrete-2007,Compte-Caceres-1998,Metzler-Klafter-2000,Zaslavsky-2002} for further models that use such equations.

When the forcing term $f$ is trivial, a complete description of the large-time behavior of~\eqref{eq:fully.nl} was recently given in~\cite{Cortazar-Quiros-Wolanski-2020a,Cortazar-Quiros-Wolanski-2020b}; see also~\cite{Kemppainen-Siljander-Zacher-2017}. Hence, since the problem is linear, it only remains to study the case with trivial initial datum, namely
\begin{equation}
\label{eq-f}
\partial_t^\a u+(-\Delta)^\beta u=f\quad\mbox{in }Q,\qquad
 	u(\cdot,0)=0\quad\mbox{in }\R^N.
\end{equation}
This task is by far more involved, and this paper represents a first step towards its completion. It is devoted to the obtention of (sharp) decay/growth rates  of solutions to~\eqref{eq-f} when the forcing term satisfies
 \begin{equation}\label{eq-hypothesis f}
 \|f(\cdot,t)\|_{L^1(\R^N)}\le \frac C{(1+t)^\gamma}\quad\text{for some }\gamma\in\mathbb{R}
 \end{equation}
 and  the spatial dimension is large, $N>4\beta$. This already involves critical phenomena depending on the values of $p$ and $\gamma$. If $1\le N\le 4\beta$, additional critical phenomena associated to the dimension appear, that make the analysis somewhat different. This case is considered in~\cite{Cortazar-Quiros-Wolanski-2021}. Notice that we are allowing $\gamma$ to take negative values, so that $\|f(\cdot,t)\|_{L^1(\mathbb{R}^N)}$ may grow with time.

If $f(\cdot,t)\in L^1(\mathbb{R}^N)$ for all $t\ge0$ and
$|\mathcal{F}f(\xi,t)|\le C|g(\xi)|$ for some function $g$ such that
$$
(1 + |\cdot|^\beta)g(\cdot) \in L^1(\mathbb{R}^N),
$$
then problem~\eqref{eq-f} has a unique bounded classical solution given by
Duhamel's type formula
\begin{equation}\label{eq-formula}
 		u(x,t)=\int_0^t\int_{\R^N}Y(x-y,t-s)f(y,s)\,{\rm d}y{\rm d}s,
\end{equation}
with $Y=\partial_t^{1-\a} Z$, where $Z$ is the solution to~\eqref{eq-f} with $f\equiv0$ having a Dirac mass as initial datum; see~\cite{Eidelman-Kochubei-2004,Kemppainen-Siljander-Zacher-2017}.
If we only assume $f(\cdot,t)\in L^1(\mathbb{R}^N)$, the function $u$ in~\eqref{eq-formula} is still well defined, but it is not in general a classical solution to~\eqref{eq-f}. Nevertheless, it is a solution in a generalized sense~\cite{Gripenberg-1985,Kemppainen-Siljander-Zacher-2017}.
In this paper we will always deal with solutions of this kind, given by~\eqref{eq-formula}, which are denoted in the literature as \emph{mild} solutions~\cite{Kemppainen-Siljander-Zacher-2017,Pruss-book}.

\medskip

\noindent\emph{Notation. } As is common in asymptotic analysis, $g\asymp h$ will mean that there are constants $\nu,\mu>0$ such that
$\nu h\le g\le \mu h$.

 \subsection{The kernel $Y$. Critical exponents}\label{subsect-estimates W}
Since the mild solution is given by the convolution in space and time of the forcing term $f$ with the kernel $Y$, having good estimates for the latter will be essential for the analysis. Such estimates were obtained in~\cite{Kemppainen-Siljander-Zacher-2017}, and are recalled next.

The kernel $Y$ has a self-similar form,
 \begin{equation}
 \label{eq:Y.selfsimilar}
 Y(x,t)=t^{\a-1-\frac{\a N}{2\beta}}G(\xi),\quad \xi=x t^{-\a/(2\beta)}.
 \end{equation}
Its profile $G$ is positive, radially symmetric and smooth outside the origin, and if $N>4\beta$ satisfies the sharp estimates
\begin{align}
\label{eq-interior estimates G}
 &G(\xi)\asymp 		{|\xi|^{4\beta-N}},&&|\xi|\le 1,\ \beta\in(0,1],\\
\label{eq-exterior estimates G1}
&G(\xi)\asymp |\xi|^{\frac{(N-2)(\alpha-1)}{(2-\alpha)}}\exp({-\sigma|\xi|^{\frac2{2-\a}}}),&&|\xi|\ge1,\ \beta=1, \\
\label{eq-exterior estimates Gbeta}
&G(\xi)\asymp |\xi|^{-(N+2\beta)},&&|\xi|\ge1,\ \beta\in(0,1).
\end{align}
In particular, we have the global bound
\begin{equation}
\label{eq:global.estimate.Y}
0\le Y(x,t)\le Ct^{-(1+\a)}|x|^{4\beta-N}\quad\text{in }Q,\quad \beta\in(0,1],
\end{equation}
and,   since $|\xi|^{\frac{(N-2)(\alpha-1)}{(2-\alpha)}}\exp(-\sigma|\xi|^{\frac2{2-\a}})\le C_\nu|\xi|^{-(N+2\beta)}$ if $|\xi|\ge\nu$,  also the exterior bound
\begin{equation}
  \label{eq:exterior.estimate.Y}
0\le Y(x,t)\le C_\nu t^{2\a-1}|x|^{-(N+2\beta)}\quad\text{if } |x|\ge \nu t^{\alpha/(2\beta)},\ t>0,\quad \beta\in(0,1].
\end{equation}
 Notice that $Y(\cdot,t)\in L^p(\mathbb{R}^N)$ if and only if $p\in [1,p_*)$, where $p_*:=N/(N-4\beta)$. Moreover,
 \begin{equation}\label{eq:p.norm.Y}
\|Y(\cdot,t)\|_{L^p(\mathbb{R}^N)}=C t^{\a-1-\frac{\a N}{2\beta}(1-\frac1p)}\quad\text{for all }t>0, \quad\text{if }p\in [1,p_*).
\end{equation}
Therefore, $Y\in L_{\rm loc}^1([0,\infty):L^p(\R^N))$ if and only if $p\in [1, p_{\rm c})$, where $p_{\rm c}:= N/(N-2\beta)$. Since the mild solution is given by a convolution of $f$ with $Y$ \emph{both in space and time}, the threshold value that will mark the border between subcritical and supercritical behaviors will be $p_{\rm c}$, and not $p_*$. In particular, condition~\eqref{eq-hypothesis f} guarantees that $u(\cdot,t)\in L^p (\mathbb{R}^N)$ for $p\in[1,p_{\rm c})$, but not for $p\ge p_{\rm c}$. Hence, in order to deal with supercritical exponents $p\ge p_{\rm c}$ we need some extra assumption on the spatial behavior of the forcing term. In the present paper we will use two different such extra hypotheses,  the poinwtise condition
\begin{equation}
\label{eq:decay.condition}
|f(x,t)|\le C|x|^{-N}(1+t)^{-\gamma}\quad \text{for }|x|\text{ large},
\end{equation}
and the integral condition
\begin{equation}
\label{eq:q.integrability.f}
\|f(\cdot,t)\|_{L^q(\R^N)}\le C(1+t)^{-\gamma}\quad\text{for some }q>q_{\rm c}(p):=\frac{N}{2\beta +\frac Np}.
\end{equation}
We do not claim that these conditions are optimal; but they are not too restrictive, and are easy enough to keep the proofs simple.

\subsection{Precedents and statement of results}\label{subsect-statement of results}
The only precedent is given in~\cite{Kemppainen-Siljander-Zacher-2017}, where  the authors study the problem in the \emph{integrable in time} case $\gamma>1$ and  prove, for all $p\in [1,\infty]$ if $1\le N<4\beta$, and for  $p\in [1,p_{\rm c})$ if $N\ge 4\beta$,   that
\begin{equation}
\label{eq:result.KSZ.subcritical}
\lim_{t\to\infty}t^{1-\alpha+\frac{\a N}{2\beta}(1-\frac1p)}\|u(\cdot,t)-M_\infty Y(\cdot,t)\|_{L^p(\mathbb{R}^N)}=0,\quad\text{where }M_\infty:=\int_0^\infty\int_{\mathbb{R}^N} f(x,t)\,{\rm d}x{\rm d}t<\infty.
\end{equation}
In particular, using~\eqref{eq:p.norm.Y} we get the sharp estimate
$$ 	
\|u(\cdot,t)\|_{L^p(\R^N)}\le C t^{-1+\a-\frac{\a N}{2\beta}(1-\frac1p)}.
$$
This result is also valid for the local case, $\alpha=1$; see for instance~\cite{Biler-Guedda-Karch-2004,Dolbeault-Karch-2006} for the case $p=1$. In this special situation $Y=Z$ is the well-known fundamental solution of the heat equation, whose profile does not have a spatial singularity and belongs to all $L^p$ spaces.

An analogous convergence result is definitely not possible for $\alpha\in(0,1)$ if $p\ge p_*$, since $Y(\cdot,t)\not\in L^p(\mathbb{R}^N)$ in that case, or if $\gamma\le 1$. Moreover, even in the subcritical range~\eqref{eq:result.KSZ.subcritical} only gives a sharp rate and a nontrivial limit profile in the diffusive scale $|x|\asymp t^{\alpha/(2\beta)}$; see below. Hence we need a different approach.

As we will see, it turns out that, in contrast with the local case, and due to the effect of memory, the decay/growth rates are not the same in different space-time scales. Moreover, the scale that determines the dominant rate depends on the value of the exponent $p$.
Our strategy will consist in tackling this difficulty directly by studying separately the rates in exterior regions, $|x|\ge\nu t^{\a/2\beta}$ with $\nu>0$, compact sets or intermediate  regions $ |x|\asymp g(t)$ with $g(t)\to\infty$ and $g(t)=o(t^{\a/2\beta})$.
We already found these phenomenon for the Cauchy problem, \eqref{eq:fully.nl} with $f\equiv0$, where the decay rate was  $O(t^{-\a})$ in compact sets and $O\big(t^{-\frac{N\a}{2\beta}(1-\frac1p)}\big)$ in exterior regions; see~\cite{Cortazar-Quiros-Wolanski-2020a,Cortazar-Quiros-Wolanski-2020b}.

Our first result concerns exterior regions.
\begin{teo}[\sc Exterior regions]
\label{teo-exterior} Let $f$ satisfy \eqref{eq-hypothesis f} and also \eqref{eq:decay.condition} if $p\ge p_{\rm c}$.  Let $u$ be the mild solution to \eqref{eq-f}. For all $\nu>0$ there is a constant $C$ such that
	\begin{equation}\label{eq-bound exterior}
	\|u(\cdot,t)\|_{L^p(\{|x|\ge \nu t^{\a/(2\beta)}\})}\le C \begin{cases}
	t^{-\gamma+\a-\frac{\a N}{2\beta}(1-\frac1p)},&\gamma<1,\\
t^{-1+\a-\frac{\a N}{2\beta}(1-\frac1p)}\log t,&\gamma=1,\\
t^{-1+\a-\frac{\a N}{2\beta}(1-\frac1p)},&\gamma>1.
\end{cases}\end{equation}
These estimates are sharp.
\end{teo}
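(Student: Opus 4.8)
The plan is to prove the upper bound and its sharpness separately, and within the upper bound to distinguish the subcritical range $p\in[1,p_{\rm c})$, where hypothesis \eqref{eq-hypothesis f} alone suffices, from the supercritical range $p\ge p_{\rm c}$, where \eqref{eq:decay.condition} must be used; throughout, $t$ is taken large, the bounded-time regime being routine. In the subcritical range $p<p_*$, so $Y(\cdot,\tau)\in L^p(\R^N)$; bounding the norm over the exterior region by the norm over $\R^N$ and using Young's inequality in the space variable,
$$
\|u(\cdot,t)\|_{L^p(\R^N)}\le\int_0^t\|Y(\cdot,t-s)\|_{L^p(\R^N)}\|f(\cdot,s)\|_{L^1(\R^N)}\,{\rm d}s\le C\int_0^t(t-s)^{b-1}(1+s)^{-\gamma}\,{\rm d}s,
$$
with $b:=\a-\frac{\a N}{2\beta}(1-\frac1p)$, which is positive precisely because $p<p_{\rm c}$, by \eqref{eq:p.norm.Y} and \eqref{eq-hypothesis f}. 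I would then split this integral at $s=t/2$: on $(0,t/2)$ one has $(t-s)^{b-1}\asymp t^{b-1}$, on $(t/2,t)$ one has $(1+s)^{-\gamma}\asymp t^{-\gamma}$, and evaluating the two elementary integrals that remain produces $t^{b-\gamma}$ if $\gamma<1$, $t^{b-1}\log t$ if $\gamma=1$, and $t^{b-1}$ if $\gamma>1$ --- exactly the right-hand side of \eqref{eq-bound exterior}.

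For $p\ge p_{\rm c}$, where $Y$ need not lie in $L^p$ (nor in $L^1_{\rm loc}([0,\infty):L^p)$), I would instead estimate $u$ pointwise on $\{|x|\ge\nu t^{\a/(2\beta)}\}$. Fix such an $x$ with $|x|$ large, and split the inner integral in \eqref{eq-formula} over $\{|y|\le|x|/2\}$ and $\{|y|\ge|x|/2\}$. On the first set $|x-y|\ge|x|/2\ge\frac{\nu}{2}(t-s)^{\a/(2\beta)}$, so \eqref{eq:exterior.estimate.Y} gives $Y(x-y,t-s)\le C(t-s)^{2\a-1}|x|^{-(N+2\beta)}$ and, by \eqref{eq-hypothesis f}, that part of the $y$-integral is at most $C(t-s)^{2\a-1}|x|^{-(N+2\beta)}(1+s)^{-\gamma}$. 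On the second set $|y|$ is large, so \eqref{eq:decay.condition} gives $|f(y,s)|\le C|x|^{-N}(1+s)^{-\gamma}$ and, since $\|Y(\cdot,\tau)\|_{L^1(\R^N)}=C\tau^{\a-1}$ by \eqref{eq:p.norm.Y}, that part is at most $C(t-s)^{\a-1}|x|^{-N}(1+s)^{-\gamma}$. Integrating in $s$ with the same split-at-$t/2$ computation (with $2\a$ and $\a$ in place of $b$) yields
$$
|u(x,t)|\le C\big(|x|^{-(N+2\beta)}\Phi_{2\a}(t)+|x|^{-N}\Phi_{\a}(t)\big),
$$
where $\Phi_c(t)$ denotes $t^{c-\gamma}$, $t^{c-1}\log t$ or $t^{c-1}$ according to whether $\gamma<1$, $\gamma=1$ or $\gamma>1$. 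Since $\||x|^{-m}\|_{L^p(\{|x|\ge R\})}=cR^{N/p-m}$ for $m>N/p$, choosing $R=\nu t^{\a/(2\beta)}$ one checks that the two contributions produce the same power of $t$, which is the bound in \eqref{eq-bound exterior}.

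For sharpness it is enough to exhibit one admissible forcing term realizing each rate. I would take $f(x,t)=(1+t)^{-\gamma}\phi(x)$ with $0\le\phi\in C_c^\infty(B_1)$, $\phi\not\equiv0$, which satisfies \eqref{eq-hypothesis f} and, trivially, \eqref{eq:decay.condition}; then $u\ge0$, and the idea is to bound $u$ from below on the diffusive annulus $A_t:=\{\nu t^{\a/(2\beta)}\le|x|\le2\nu t^{\a/(2\beta)}\}$ for $t$ large. For $y\in B_1$ one has $|x-y|\asymp|x|\asymp t^{\a/(2\beta)}$, so whenever $t-s\asymp t$ the self-similar variable $|x-y|(t-s)^{-\a/(2\beta)}$ stays in a fixed compact subset of $(0,\infty)$, on which the positive continuous profile $G$ is bounded above and below by positive constants; hence $Y(x-y,t-s)\asymp(t-s)^{\a-1-\frac{\a N}{2\beta}}\asymp t^{\a-1-\frac{\a N}{2\beta}}$ there. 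Integrating over $s\in(t/2,3t/4)$ when $\gamma<1$, over $s\in(1,t/2)$ when $\gamma=1$, and over $s\in(0,1)$ when $\gamma>1$, we get $u(x,t)\ge c\,t^{\a-1-\frac{\a N}{2\beta}}\psi(t)$ on $A_t$, with $\psi(t)$ equal to $t^{1-\gamma}$, $\log t$, or $1$ respectively; since $|A_t|\asymp t^{\a N/(2\beta)}$, this gives
$$
\|u(\cdot,t)\|_{L^p(\{|x|\ge\nu t^{\a/(2\beta)}\})}\ge\|u(\cdot,t)\|_{L^p(A_t)}\ge c\,t^{\a-1-\frac{\a N}{2\beta}}\psi(t)\,t^{\a N/(2\beta p)},
$$
which is exactly the right-hand side of \eqref{eq-bound exterior}.

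The main obstacle is controlling the profile $G$ in the self-similar transition zone, where none of \eqref{eq-interior estimates G}--\eqref{eq-exterior estimates Gbeta} applies literally; this is what forces the lower bound to be carried out on the annulus $A_t$ and with $t-s\asymp t$, so that the self-similar variable stays in a compact set on which $G\asymp1$. For $\beta=1$ one must also keep track of the stretched-exponential tail of $G$, but it only helps, since the upper bound relies on \eqref{eq:exterior.estimate.Y}, which already absorbs it; the split-at-$t/2$ integrals and the bounded-time bookkeeping are routine.
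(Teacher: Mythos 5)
Your proof is correct, and its overall architecture (Young's inequality in space for $p<p_{\rm c}$, a pointwise split of the $y$-integral at $|y|=|x|/2$ for $p\ge p_{\rm c}$, and a compactly supported forcing term evaluated on a diffusive annulus for sharpness) coincides with the paper's. The one genuine difference is your treatment of the far region $\{|y|\ge |x|/2\}$ in the supercritical case: you simply pull out $|f(y,s)|\le C|x|^{-N}(1+s)^{-\gamma}$ via \eqref{eq:decay.condition} and integrate $Y(x-y,t-s)$ over all of $\R^N$ using $\|Y(\cdot,\tau)\|_{L^1(\R^N)}=C\tau^{\a-1}$, obtaining the clean pointwise bound $C|x|^{-N}(t-s)^{\a-1}(1+s)^{-\gamma}$, whose $L^p$ norm over $\{|x|\ge\nu t^{\a/(2\beta)}\}$ is finite because $p\ge p_{\rm c}>1$. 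The paper instead introduces, for each $\ep\in(0,1/2)$, the set $\mathcal{B}(x,t,s)=\{|x-y|<|x|^\ep(t-s)^{(\a/(2\beta))(1-\ep)}\}$ and estimates $Y$ separately on $\mathcal{B}$ (interior bound \eqref{eq:global.estimate.Y}) and on its complement (exterior bound \eqref{eq:exterior.estimate.Y}), at the cost of an $\ep$-dependent bookkeeping; both routes yield identical rates, and yours is shorter. Two minor remarks: your worry about a ``transition zone'' for $G$ is unfounded, since \eqref{eq-interior estimates G}--\eqref{eq-exterior estimates Gbeta} are two-sided and together cover all $\xi\ne0$, so $G\asymp1$ on any compact subset of $\{0<|\xi|<\infty\}$ is immediate (the paper indeed integrates over all of $s\in(0,t/2)$ in the sharpness argument rather than over your $\gamma$-dependent subintervals, which is marginally simpler); and in the subcritical case one should note, as you implicitly do, that $b>0$ exactly when $p<p_{\rm c}$, which is what makes $\int_{t/2}^t(t-s)^{b-1}\,{\rm d}s$ converge.
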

For $p\in[1,p_{\rm c})$ and $\gamma>1$ the result follows from~\eqref{eq:result.KSZ.subcritical}, showing that the behavior in this regions dominates the global behavior in the subcritical case.

We now turn to the behavior in compact sets which, due to the effect of memory,  will dominate the global behavior for large values of $p$.

\begin{teo}[\sc Compact sets]\label{teo-compactos} Let  $f$ satisfy \eqref{eq-hypothesis f}. If $p\ge p_{\rm c}$, assume  also  \eqref{eq:q.integrability.f}  with $\gamma$ as in \eqref{eq-hypothesis f}. Let $u$ be the mild solution to \eqref{eq-f}. For every compact set $K$ there exists a constant $C$ such that
		\begin{equation}\label{eq-bound compactos1}
		\|u(\cdot,t)\|_{L^p(K)}\le C \begin{cases}
		t^{-\gamma},&\gamma\le 1+\a,\\
		t^{-(1+\a)},&\gamma\ge 1+\a.
		\end{cases}
		\end{equation}
These estimates are sharp.
\end{teo}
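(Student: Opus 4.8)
The plan is to estimate the Duhamel integral \eqref{eq-formula} directly, splitting the time integration at a scale dictated by the observation point. Fix a compact set $K$, so that $|x|\le R$ for $x\in K$, and split
$$
u(x,t)=\int_0^{t/2}\int_{\R^N}Y(x-y,t-s)f(y,s)\,{\rm d}y{\rm d}s+\int_{t/2}^{t}\int_{\R^N}Y(x-y,t-s)f(y,s)\,{\rm d}y{\rm d}s=:u_1(x,t)+u_2(x,t).
$$
For $u_1$, when $s\le t/2$ we have $t-s\asymp t$, so the kernel is evaluated at large times; here I would use the global pointwise bound \eqref{eq:global.estimate.Y}, namely $Y(x-y,t-s)\le C t^{-(1+\a)}|x-y|^{4\beta-N}$, together with $\|f(\cdot,s)\|_{L^1}\le C(1+s)^{-\gamma}$. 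Since $4\beta-N<0$ and $N-4\beta<N$, convolution of $f(\cdot,s)\in L^1$ with the locally integrable (in $x$) function $|\cdot|^{4\beta-N}$ produces, uniformly on $K$, a bounded-in-$L^p(K)$ quantity; thus $\|u_1(\cdot,t)\|_{L^p(K)}\le C t^{-(1+\a)}\int_0^{t/2}(1+s)^{-\gamma}\,{\rm d}s$. Evaluating the time integral gives the claimed dichotomy: it is $O(t^{1-\gamma})$ for $\gamma<1$, $O(\log t)$ for $\gamma=1$, and $O(1)$ for $\gamma>1$, so that $\|u_1(\cdot,t)\|_{L^p(K)}$ is bounded by $Ct^{-\gamma}$ when $\gamma\le 1+\a$ and by $Ct^{-(1+\a)}$ when $\gamma\ge 1+\a$ (the crossover in the log case being harmless).

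For $u_2$, now $s\asymp t$ so $\|f(\cdot,s)\|_{L^1}\le C t^{-\gamma}$ (and likewise $\|f(\cdot,s)\|_{L^q}\le Ct^{-\gamma}$ under \eqref{eq:q.integrability.f}), while $t-s$ ranges over $(0,t/2)$ and the kernel is evaluated at all times, including small ones where its spatial singularity matters. Here I would use Young's convolution inequality in space: for $p\in[1,p_{\rm c})$ we have $\|Y(\cdot,t-s)\|_{L^p}=C(t-s)^{\a-1-\frac{\a N}{2\beta}(1-1/p)}$ by \eqref{eq:p.norm.Y}, so $\|u_2(\cdot,t)\|_{L^p(\R^N)}\le \int_{t/2}^t\|Y(\cdot,t-s)\|_{L^p}\|f(\cdot,s)\|_{L^1}\,{\rm d}s\le Ct^{-\gamma}\int_0^{t/2}\tau^{\a-1-\frac{\a N}{2\beta}(1-1/p)}\,{\rm d}\tau$; since $p<p_{\rm c}$ the exponent exceeds $-1$ and the integral is $O(t^{\a-\frac{\a N}{2\beta}(1-1/p)})$, which for $p<p_{\rm c}$ is $o(1)$ (indeed negative power), giving $\|u_2(\cdot,t)\|_{L^p(K)}\le Ct^{-\gamma}$ — absorbed into the first case. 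For $p\ge p_{\rm c}$ one instead splits $f=f\mathbf 1_{|y|\le 1}+f\mathbf 1_{|y|>1}$ or, more cleanly, applies Young's inequality with the $L^q$ bound \eqref{eq:q.integrability.f}: writing $\frac1p+1=\frac1r+\frac1q$, one needs $Y(\cdot,\tau)\in L^r$, i.e. $r<p_*$, which is exactly what $q>q_{\rm c}(p)$ guarantees, and then $\|Y(\cdot,\tau)\|_{L^r}=C\tau^{\a-1-\frac{\a N}{2\beta}(1-1/r)}$ with the exponent again $>-1$ precisely in the relevant range; integrating over $\tau\in(0,t/2)$ yields a nonpositive power of $t$ times $t^{-\gamma}$, hence $\|u_2(\cdot,t)\|_{L^p(K)}\le Ct^{-\gamma}$. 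Combining $u_1$ and $u_2$ gives the upper bound in \eqref{eq-bound compactos1}.

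For sharpness, I would exhibit a forcing term realizing each rate. Take $f(y,s)=\phi(y)(1+s)^{-\gamma}$ for a fixed nonnegative, nontrivial $\phi\in L^1\cap L^q$ with, say, compact support. Then for $x$ in a neighborhood of the support and $t$ large, $u(x,t)\ge \int_0^{t/2}(1+s)^{-\gamma}\big(\int Y(x-y,t-s)\phi(y)\,{\rm d}y\big){\rm d}s$, and using the lower bound in \eqref{eq-interior estimates G} one gets $\int Y(x-y,t-s)\phi(y)\,{\rm d}y\asymp (t-s)^{\a-1-\frac{\a N}{2\beta}}\cdot(t-s)^{\frac{\a}{2\beta}(4\beta-N)}\cdot(\text{const})\asymp (t-s)^{-(1+\a)}$ on the diffusive-scaled ball containing $x$ (equivalently, invoke the precise first-order asymptotics $Y(x,t)\sim c\,t^{-(1+\a)}|x|^{4\beta-N}$ near the origin, which follows from \eqref{eq-interior estimates G} together with the self-similar form \eqref{eq:Y.selfsimilar}). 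Hence $u(x,t)\gtrsim t^{-(1+\a)}\int_0^{t/2}(1+s)^{-\gamma}\,{\rm d}s$, whose time integral is $\asymp t^{1-\gamma}$, $\log t$, or $1$ according as $\gamma<1$, $=1$, $>1$; taking the $L^p(K)$ norm over a fixed ball $K$ where the pointwise lower bound holds reproduces exactly the upper rates, so the estimates are sharp.

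The main obstacle is the treatment of $u_2$ for supercritical $p\ge p_{\rm c}$: one must control the convolution against the strong spatial singularity $|x|^{4\beta-N}$ of $Y$ at small times without any $L^1$-in-time integrability of $\|Y(\cdot,\tau)\|_{L^p}$, and it is precisely here that the extra hypothesis \eqref{eq:q.integrability.f} — with the sharp threshold $q_{\rm c}(p)=N/(2\beta+N/p)$ chosen so that $r<p_*$ in Young's inequality — is needed and must be used with care (in particular checking that the resulting time exponent $\a-1-\frac{\a N}{2\beta}(1-1/r)$ stays strictly above $-1$). A secondary, but routine, point is bookkeeping the borderline case $\gamma=1+\a$ and the logarithmic case $\gamma=1$ so that both branches of \eqref{eq-bound compactos1} are matched without loss.
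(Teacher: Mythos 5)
There are genuine gaps, and they all stem from splitting the time integral at $s=t/2$ rather than at $s=t-1$. First, in your bound for $u_2$ the time factor is $\int_0^{t/2}\tau^{\a-1-\frac{\a N}{2\beta}(1-\frac1p)}\,{\rm d}\tau\asymp t^{\a-\frac{\a N}{2\beta}(1-\frac1p)}$, and for $p<p_{\rm c}$ the exponent $\a-\frac{\a N}{2\beta}(1-\frac1p)$ is \emph{positive}, not negative (it is negative only for $p>p_{\rm c}$); the same happens with $r<p_{\rm c}$ in your supercritical variant. So this argument only yields $\|u_2(\cdot,t)\|_{L^p}\le Ct^{-\gamma+\a-\frac{\a N}{2\beta}(1-\frac1p)}$, which is the exterior-region rate and strictly weaker than the claimed $t^{-\gamma}$ on compact sets. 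The gain on compact sets for $t-s\ge 1$ comes precisely from the pointwise bound \eqref{eq:global.estimate.Y}, $Y(x-y,t-s)\le C(t-s)^{-(1+\a)}|x-y|^{4\beta-N}$, so the Young-in-space-time argument must be confined to $s\in(t-1,t)$, where $\int_0^1\tau^{\a-1-\frac{\a N}{2\beta}(1-\frac1r)}\,{\rm d}\tau$ is a finite constant. Second, your claim that convolving $f(\cdot,s)\in L^1$ with the locally integrable kernel $|\cdot|^{4\beta-N}$ gives a quantity bounded in $L^p(K)$ by $\|f(\cdot,s)\|_{L^1}$ is false for $p\ge p_*$: local integrability of the kernel yields $L^1_{\rm loc}$, not $L^p_{\rm loc}$ (let $f(\cdot,s)$ concentrate near a point and the convolution behaves like $|x-x_0|^{4\beta-N}\notin L^p_{\rm loc}$). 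Hence hypothesis \eqref{eq:q.integrability.f} is needed in the \emph{long-time} piece too, via Young's inequality with $1+\frac1p=\frac1q+\frac1r$ and $|z|^{4\beta-N}\in L^r(B_1)$ (guaranteed because $q>q_{\rm c}(p)$ forces $r<p_{\rm c}<p_*$) — contrary to your closing remark that the only supercritical obstacle sits in $u_2$.

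The sharpness argument has the mirror-image defect. Keeping only $s\in(0,t/2)$ gives $u(x,t)\gtrsim t^{-(1+\a)}\int_0^{t/2}(1+s)^{-\gamma}\,{\rm d}s$, which is $\asymp t^{-(\gamma+\a)}$ for $\gamma<1$ and $\asymp t^{-(1+\a)}$ for $\gamma>1$; note $t^{-(1+\a)}\cdot t^{1-\gamma}=t^{-(\gamma+\a)}$, not $t^{-\gamma}$, so this matches the upper bound only when $\gamma\ge1+\a$. For $\gamma<1+\a$ the dominant contribution comes from $s$ near $t$: you must also keep $s\in(t/2,t-1)$, where $(1+s)^{-\gamma}\asymp t^{-\gamma}$ and $\int_{t/2}^{t-1}(t-s)^{-(1+\a)}\,{\rm d}s\ge c>0$, giving $u(x,t)\ge ct^{-\gamma}$ on $K$. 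With the split moved to $t-1$, the $L^q$ hypothesis invoked in both pieces for $p\ge p_{\rm c}$, and the lower bound supplemented as above, your outline matches the paper's proof.
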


\noindent\emph{Remark.} Note that $q_{\rm c}(p_{\rm c}) =1$.

As expected, the rates in intermediate regions, between compact sets and exterior regions, are intermediate between the ones in such scales.
\begin{teo}[\sc  Intermediate regions]
\label{teo-intermediate}
Let $f$ satisfy \eqref{eq-hypothesis f} and also \eqref{eq:decay.condition} if $p\ge p_{\rm c}$. Let $g(t)\to\infty$ be such that $g(t)=o(t^{\a/(2\beta)})$. Let $u$ be the mild solution to \eqref{eq-f}. For all $0<\nu<\mu<\infty$ there exists a constant $C$ such that
	\begin{equation}\label{eq-bound intermediate}
	\|u(\cdot,t)\|_{L^p(\{\nu \le |x|/g(t)\le \mu\})}\le C g(t)^{2\beta-N(1-\frac1p)}\begin{cases}
	t^{-\gamma},&\gamma<1,\\
	\max\{t^{-1},t^{-(1+\a)}g(t)^{2\beta}\log t \},&\gamma=1,\\
	\max\{t^{-\gamma},t^{-(1+\a)} g(t)^{2\beta}\},&\gamma>1.
	\end{cases}
	\end{equation} 	
These estimates are sharp.
\end{teo}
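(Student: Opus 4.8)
The plan is to estimate the mild solution directly from Duhamel's formula \eqref{eq-formula} by splitting the time integral according to the natural scales associated with the point $x$ with $|x|\asymp g(t)$, and then to use the kernel bounds \eqref{eq:global.estimate.Y}, \eqref{eq:exterior.estimate.Y} together with \eqref{eq:p.norm.Y}. Fix $t$ large and $x$ with $\nu\le|x|/g(t)\le\mu$; write $s=t-\tau$ so that $u(x,t)=\int_0^t\int_{\R^N}Y(x-y,\tau)f(y,t-\tau)\,{\rm d}y\,{\rm d}\tau$. The key geometric observation is that the point $x$ sits in the \emph{interior} scale $|x|\lesssim\tau^{\a/(2\beta)}$ of the kernel $Y(\cdot,\tau)$ precisely when $\tau\gtrsim g(t)^{2\beta/\a}=:\tau_*$, and in the \emph{exterior} scale when $\tau\lesssim\tau_*$; note $\tau_*=o(t)$ because $g(t)=o(t^{\a/(2\beta)})$. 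Accordingly I split $\int_0^t{\rm d}\tau=\int_0^{\tau_*}+\int_{\tau_*}^t$.

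For the \textbf{early part} $\tau\in(0,\tau_*)$ one uses that $|f(\cdot,t-\tau)|\le C(1+t-\tau)^{-\gamma}\asymp t^{-\gamma}$ on this range (since $\tau_*=o(t)$), treats the spatial convolution $\int_{\R^N}Y(x-y,\tau)|f(y,t-\tau)|\,{\rm d}y$ by Young's inequality using the $L^1_y$ bound on $Y(\cdot,\tau)$ when $p<p_{\rm c}$, and when $p\ge p_{\rm c}$ one splits the $y$-integral using the decay hypothesis \eqref{eq:decay.condition} on $f$ for $|y|$ large (combined with \eqref{eq-hypothesis f} and the exterior bound \eqref{eq:exterior.estimate.Y} for the part where $|x-y|\gtrsim\tau^{\a/(2\beta)}$), exactly as in the proof of Theorem~\ref{teo-exterior}. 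Taking then the $L^p_x$ norm over the annulus $\{\nu g(t)\le|x|\le\mu g(t)\}$, whose measure is $\asymp g(t)^N$, and carrying out the $\tau$-integral $\int_0^{\tau_*}\tau^{\a-1-\frac{\a N}{2\beta}(1-1/p)}\,{\rm d}\tau$ — which converges at $\tau=0$ because the integrand's exponent exceeds $-1$ for $p<p_{\rm c}$ — produces, after substituting $\tau_*=g(t)^{2\beta/\a}$, a contribution of order $t^{-\gamma}g(t)^{2\beta-N(1-1/p)}$. This is the first branch of the right-hand side of \eqref{eq-bound intermediate}, and it is the dominant one when $\gamma<1$.

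For the \textbf{late part} $\tau\in(\tau_*,t)$ the point $x$ is in the interior regime of $Y(\cdot,\tau)$, so on the relevant annulus $|x|\asymp g(t)$ one has the pointwise bound $Y(x-y,\tau)\asymp\tau^{\a-1-\frac{\a N}{2\beta}}G(\xi)$ which for the bulk of the mass is $\asymp\tau^{\a-1}|x-y|^{4\beta-N}$ by \eqref{eq-interior estimates G} (still integrable in $y$ locally since $N>4\beta$), while for $|x-y|$ of order $\tau^{\a/(2\beta)}$ or larger one uses \eqref{eq:exterior.estimate.Y}. Estimating $\|Y(\cdot,\tau)\|_{L^1(\R^N)}\asymp\tau^{\a-1}$ and pairing with $\|f(\cdot,t-\tau)\|_{L^1}\le C(1+t-\tau)^{-\gamma}$, we are led to $\int_{\tau_*}^t\tau^{\a-1}(1+t-\tau)^{-\gamma}\,{\rm d}\tau$ times the factor $g(t)^{-N(1-1/p)}$ coming from $\|{\bf 1}_{\{|x|\asymp g(t)\}}\|$; a standard computation of this convolution-type integral (splitting at $\tau=t/2$, using that near $\tau=t$ the weight $\tau^{\a-1}\asymp t^{\a-1}$ and $\int_0^{t/2}(1+\sigma)^{-\gamma}{\rm d}\sigma$ behaves like $t^{1-\gamma}$, $\log t$ or a constant according to $\gamma\lessgtr1$) gives the $\max\{t^{-\gamma},t^{-(1+\a)}g(t)^{2\beta}\}$ and $\max\{t^{-1},t^{-(1+\a)}g(t)^{2\beta}\log t\}$ terms after multiplying by the overall $g(t)^{2\beta}$ that appears when one keeps track of all powers — the term $t^{-(1+\a)}g(t)^{2\beta}$ is precisely the contribution of $\tau$ near $\tau_*$. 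One must be careful that for $p\ge p_{\rm c}$ the naive $L^1$ bound on $Y(\cdot,\tau)$ is still fine here because $x$ is in the interior scale, so no extra hypothesis beyond \eqref{eq:decay.condition} is needed in the late part; the $L^p_x$ norm over the fixed annulus only costs the volume factor.

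The \textbf{sharpness} (lower bounds) is obtained, as in Theorems~\ref{teo-exterior} and~\ref{teo-compactos}, by choosing an explicit nonnegative forcing term that saturates the hypotheses — e.g.\ $f(y,s)=(1+s)^{-\gamma}\varphi(y)$ with $\varphi\ge0$ compactly supported (and, for the supercritical range, $f(y,s)=(1+s)^{-\gamma}|y|^{-N}$ outside a ball) — and evaluating \eqref{eq-formula} from below on the annulus using the two-sided kernel estimates \eqref{eq-interior estimates G}--\eqref{eq-exterior estimates Gbeta}; positivity of $Y$ lets one discard all but one well-chosen time sub-interval and one spatial region, matching each branch of \eqref{eq-bound intermediate}. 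The main obstacle I anticipate is \textbf{bookkeeping of the competing scales} in the late-time integral: one has to verify that the crossover between the $t^{-\gamma}$ (or $t^{-1}\log t$) behavior and the $t^{-(1+\a)}g(t)^{2\beta}$ behavior is governed exactly by comparing $\tau_*=g(t)^{2\beta/\a}$ with $t$, and that no intermediate sub-scale produces a larger term; getting the logarithm at $\gamma=1$ and confirming it survives only in the stated place requires splitting the $\tau$-integral at both $\tau_*$ and $t/2$ and tracking the endpoint contributions with care.
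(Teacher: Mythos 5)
Your overall architecture coincides with the paper's: split the time integral at $\tau_*=g(t)^{2\beta/\a}$ and at $t/2$, use the exterior kernel bound \eqref{eq:exterior.estimate.Y} for $\tau<\tau_*$ and the global/interior bound afterwards, invoke \eqref{eq:decay.condition} when $p\ge p_{\rm c}$, and test with an explicit forcing term for sharpness. Your early-part computation is correct for $p<p_{\rm c}$: Young's inequality with \eqref{eq:p.norm.Y} and $\int_0^{\tau_*}\tau^{\a-1-\frac{\a N}{2\beta}(1-\frac1p)}\,{\rm d}\tau\asymp g(t)^{2\beta-N(1-\frac1p)}$ gives exactly the $t^{-\gamma}$ branch.

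The late part $\tau\in(\tau_*,t)$, however, contains a genuine gap. You replace the kernel by its $L^1$ norm $\tau^{\a-1}$, pair it with $\|f(\cdot,t-\tau)\|_{L^1}$, and then assert that the $L^p$ norm over the annulus \emph{only costs the volume factor} $g(t)^{-N(1-\frac1p)}$. This is not a valid upper bound: an $L^1\ast L^1$ pairing controls only the $L^1_x$ norm of the convolution, and $\|h\|_{L^p(A)}\le |A|^{\frac1p-1}\|h\|_{L^1(A)}$ is false in general (H\"older gives the reverse inequality). Moreover the quantity you arrive at, $g(t)^{-N(1-\frac1p)}\int_{\tau_*}^t\tau^{\a-1}(1+t-\tau)^{-\gamma}\,{\rm d}\tau$, is $\asymp g(t)^{-N(1-\frac1p)}t^{\a-1}$ for $\gamma>1$, strictly larger than the claimed sharp bound (note $g(t)^{4\beta}t^{-(1+\a)}=o(t^{\a-1})$ since $g(t)=o(t^{\a/(2\beta)})$); the ``overall $g(t)^{2\beta}$ that appears when one keeps track of all powers'' has no source in your computation. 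What is actually required — and what the paper does — is a spatial splitting at $|x-y|=|x|/2$. Where the kernel argument exceeds $|x|/2$, the pointwise bound $Y(x-y,\tau)\le C\tau^{-(1+\a)}|x|^{4\beta-N}$ from \eqref{eq:global.estimate.Y} supplies the weight $g(t)^{4\beta-N}$ (not $\tau^{\a-1}$), and integrating $\tau^{-(1+\a)}$ against $(1+t-\tau)^{-\gamma}$ over $(\tau_*,t)$ then yields both the $t^{-\gamma}g(t)^{2\beta-N(1-\frac1p)}$ branch (from $\tau$ near $\tau_*$) and the $g(t)^{4\beta-N(1-\frac1p)}t^{-(1+\a)}\int_0^{t/2}(1+r)^{-\gamma}\,{\rm d}r$ competitor (from $\tau$ near $t$ — not near $\tau_*$, as you state). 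Where the kernel argument is below $|x|/2$, the forcing term is evaluated at distance $\ge|x|/2\asymp g(t)$ from the origin, and one must use either the $L^p$-integrability of the kernel singularity (subcritical $p$) or the pointwise decay \eqref{eq:decay.condition} (supercritical $p$); your remark that no extra hypothesis is needed in the late part overlooks that this is precisely where \eqref{eq:decay.condition} enters for $p\ge p_*$, since $|x-y|^{4\beta-N}$ is not $p$-integrable near its singularity there.
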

%
%The above result yields the following corollary.
%\begin{coro}\label{coro-intermediate} Assume the hypotheses of Theorem~\ref{teo-intermediate}.  Let  $g(t)=t^\delta h(t)$ with $0\le\delta\le\a/(2\beta)$ and $\frac{|\log h(t)|}{\log t}\to0$ as $t\to \infty$. In case $\delta=0$, assume moreover that $h(t)\to\infty$ and if $\delta=\a/(2\beta)$, assume  moreover that $h(t)\to0$.  Then for every $0<\nu<\mu<\infty$,
%	\[\begin{aligned}
%\|u(\cdot,t)&\|_{L^p(\{\nu \le |x|/g(t)\le \mu\})}\\
%	&\le C\begin{cases}
%	t^{-\gamma+\delta\big(2\beta-N(1-\frac1p)\big)}h(t)^{2\beta-N(1-\frac1p)},&\gamma< 1+\a-2\beta\delta,\\
%		t^{-\gamma+\delta\big(2\beta-N(1-\frac1p)\big)}h(t)^{2\beta-N(1-\frac1p)},&\gamma= 1+\a-2\beta\delta\ \&\  \limsup\limits_{t\to\infty}h(t)<\infty,\\
%	t^{-(1+\a)+\delta\big(4\beta-N(1-\frac1p)\big)}h(t)^{4\beta-N(1-\frac1p)},&\gamma= 1+\a-2\beta\delta\ \&\  \liminf\limits_{t\to\infty}h(t)>0,\\
%t^{-(1+\a)+\delta\big(4\beta-N(1-\frac1p)\big)}h(t)^{4\beta-N(1-\frac1p)},&\gamma> 1+\a-2\beta\delta.
%	\end{cases}
%	\end{aligned}
%	\]
%\end{coro}

We also obtain results that connect the behaviors in  compact sets and exterior
regions, thus getting the (global) decay rate in $L^p(\R^N)$.
\begin{teo}[\sc Global results]
\label{teo-conexion}  Assume \eqref{eq-hypothesis f},  and also \eqref{eq:decay.condition}  and
\eqref{eq:q.integrability.f}
with $\gamma$ as in \eqref{eq-hypothesis f} if $p\ge p_{\rm c}$. Let $u$ be the mild solution to \eqref{eq-f}.  There is a constant $C$ such that $$	
\|u(\cdot,t)\|_{L^p(\R^N)}\le C\begin{cases}
		t^{-\gamma+\a-\frac{\a N}{2\beta}(1-\frac1p)},&\gamma<1,\\
		t^{-1+\alpha-\frac{\a N}{2\beta}(1-\frac1p)}\log t,&\gamma=1,\hskip3.4cm p\in [1,p_{\rm c}),\\
		t^{-1+\alpha-\frac{\a N}{2\beta}(1-\frac1p)},&\gamma>1,\\[0.3cm]
		t^{-\gamma}\log t,&\gamma\le 1,\\[-0.3cm]
&\hskip4.5cm p=p_{\rm c},\\[-0.3cm]
		t^{-1},&\gamma> 1,\\[0.3cm]
    	t^{-\gamma},&\gamma\le 1-\a+\frac{\a N}{2\beta}(1-\frac1p),\\[-0.3cm]
&\hskip4.5cm p\in (p_{\rm c},p_*),\\[-0.3cm]
    	t^{-1+\a-\frac{\a N}{2\beta}(1-\frac1p)},&\gamma\ge 1-\a+\frac{\a N}{2\beta}(1-\frac1p),\\[0.3cm]
		t^{-\gamma},&\gamma< 1+\a,\\[-0.3cm]
&\hskip4.5cm p=p_*,\\[-0.3cm]
		t^{-(1+\a)}\log t,&\gamma\ge 1+\a,\\[0.3cm]
		t^{-\gamma},&\gamma\le 1+\a,\\[-0.3cm]
&\hskip4.5cm p>p_*,\\[-0.3cm]
		t^{-(1+\a)},&\gamma\ge 1+\a;
		\end{cases}
$$
see Figure~\ref{fig:global.rates}.
These estimates are sharp.	
\end{teo}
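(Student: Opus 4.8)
The plan is to obtain the global estimate by gluing together the estimates already proved in exterior regions, compact sets and intermediate regions. Fix $\nu>0$, let $j_1=j_1(t)$ be the largest integer with $2^{j_1}\le\nu t^{\a/(2\beta)}$, so that $j_1(t)\asymp\log t$ for $t$ large, and consider the essentially disjoint decomposition $\R^N=B_1\cup\big(\bigcup_{j=0}^{j_1}A_j\big)\cup\{|x|>2^{j_1+1}\}$, where $A_j=\{2^j\le|x|\le 2^{j+1}\}$ and $\{|x|>2^{j_1+1}\}\subset\{|x|\ge\nu t^{\a/(2\beta)}\}$ by maximality of $j_1$. Then $\|u(\cdot,t)\|_{L^p(\R^N)}^p$ is the sum of the $p$-th powers of the $L^p$ norms over these pieces. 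The norm over $\{|x|>2^{j_1+1}\}$ is controlled by Theorem~\ref{teo-exterior}, and the norm over $B_1$ by Theorem~\ref{teo-compactos}. The key remaining point is that the proof of Theorem~\ref{teo-intermediate} in fact provides, for every dyadic radius $R=2^j$ with $1\le R\le\nu t^{\a/(2\beta)}$, a bound $\|u(\cdot,t)\|_{L^p(A_j)}\le C\,R^{2\beta-N(1-\frac1p)}\Phi_\gamma(t,R)$ with $C$ independent of $j$ and $t$, where $\Phi_\gamma(t,R)=t^{-\gamma}$ for $\gamma<1$, $\Phi_\gamma(t,R)=\max\{t^{-1},t^{-(1+\a)}R^{2\beta}\log t\}$ for $\gamma=1$, and $\Phi_\gamma(t,R)=\max\{t^{-\gamma},t^{-(1+\a)}R^{2\beta}\}$ for $\gamma>1$. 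This reduces the interior contribution to the summation of an explicit geometric-type series in $j$.

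The next step is the bookkeeping of that series. Set $a:=2\beta-N(1-\frac1p)$, so that $a<0\iff p>p_{\rm c}$ and $a+2\beta=4\beta-N(1-\frac1p)<0\iff p>p_*$; the exponents occurring in $R^a\Phi_\gamma(t,R)$ are $a$ (multiplying the $t^{-\gamma}$ and $t^{-1}$ terms) and $a+2\beta$ (multiplying the $t^{-(1+\a)}R^{2\beta}$ term). A sum $\sum_{j=0}^{j_1(t)}2^{j\theta}$ is comparable to its first term (the compact scale $R\asymp1$) if $\theta<0$, to its last term if $\theta>0$ --- and the last dyadic scale $R\asymp t^{\a/(2\beta)}$ exactly reproduces the rates of Theorem~\ref{teo-exterior} --- and to $j_1(t)\asymp\log t$ if $\theta=0$. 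Running this with $\theta=a$ and $\theta=a+2\beta$, separately for $\gamma<1$, $\gamma=1$ and $\gamma>1$, and keeping the slowest-decaying of the exterior, compact and interior contributions, yields precisely the table in the statement: the subcritical rates of the first block come from the diffusive end ($p<p_{\rm c}$), the rates $t^{-\gamma}$ and $t^{-(1+\a)}$ for $p>p_*$ from the compact end, and the logarithmic corrections appear exactly at $p=p_{\rm c}$ (when $\gamma\le1$, where the $\theta=a$ series degenerates) and at $p=p_*$ (when $\gamma\ge1+\a$, where the $\theta=a+2\beta$ series degenerates), in both cases because the relevant series collapses to a count of the $\asymp\log t$ dyadic scales. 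One also checks that the extra hypotheses~\eqref{eq:decay.condition} and~\eqref{eq:q.integrability.f} assumed here when $p\ge p_{\rm c}$ are exactly those needed to invoke Theorems~\ref{teo-exterior}--\ref{teo-intermediate}; recall $q_{\rm c}(p_{\rm c})=1$, consistently with~\eqref{eq-hypothesis f}.

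For sharpness it is enough, in each regime, to exhibit one forcing term obeying~\eqref{eq-hypothesis f} (plus the extra conditions when $p\ge p_{\rm c}$) that realizes the stated rate. When the dominant contribution is the exterior, respectively the compact, one, we may reuse the sharpness examples of Theorem~\ref{teo-exterior}, respectively Theorem~\ref{teo-compactos}, since a lower bound on a fixed exterior region, respectively a fixed compact set, is automatically a lower bound on $\R^N$. The only genuinely new cases are $p=p_{\rm c}$ and $p=p_*$: there one takes, e.g., $f(x,t)=(1+t)^{-\gamma}\chi_{B_1}(x)$, which is nonnegative so that $u\ge0$ by~\eqref{eq-formula}, computes $u$ two-sidedly on each annulus $A_j$ using the self-similar form~\eqref{eq:Y.selfsimilar} and the sharp bounds~\eqref{eq-interior estimates G}--\eqref{eq-exterior estimates Gbeta} for $G$, and integrates the resulting $\asymp\log t$ comparable annular contributions over $\bigcup_j A_j$ to produce the logarithmic lower bound.

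The main difficulty is not any individual estimate but the organization of this case analysis: across the many $(p,\gamma)$ subregions one must track which of the three scales (compact, intermediate, diffusive) dictates the rate, the signs of $2\beta-N(1-\frac1p)$ and $4\beta-N(1-\frac1p)$, and the interaction with the thresholds $\gamma=1$ and $\gamma=1+\a$; and, above all, one must match the upper bounds with sharp lower bounds exactly at $p=p_{\rm c}$ and $p=p_*$, where the rate reflects not a single scale but the slow accumulation of the forcing's influence across all intermediate scales.
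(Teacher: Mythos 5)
Your proposal is correct in substance and, up to presentation, follows the paper's route: both reduce the problem to the connecting region between a fixed compact set and the diffusive scale $|x|\asymp t^{\a/(2\beta)}$, and both exploit the same pointwise bounds there ($C|x|^{2\beta-N}t^{-\gamma}$ for the early/middle-time contributions, $C|x|^{4\beta-N}t^{-(1+\a)}\int_0^{t/2}(1+r)^{-\gamma}\,{\rm d}r$ for the late-time one). The difference is that the paper integrates these pointwise bounds directly over $\{R<|x|<\delta t^{\a/(2\beta)}\}$ --- which is where $p_{\rm c}$ and $p_*$ appear as the exponents at which $|x|^{2\beta-N}$, resp.\ $|x|^{4\beta-N}$, just fail to be $p$-integrable at infinity, producing the logarithms --- whereas you discretize into dyadic annuli and sum a geometric series; the two computations are equivalent and your bookkeeping of the exponents $a$ and $a+2\beta$ against the thresholds $\gamma=1$ and $\gamma=1+\a$ reproduces the table correctly. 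Two caveats on your version. First, you cannot invoke Theorem~\ref{teo-intermediate} as a black box on each annulus: as stated, its constant may depend on the chosen $g$, $\nu$, $\mu$, so the uniformity in $j$ that your summation needs must be extracted by re-running its proof (checking, e.g., that $(|x|/2)^{2\beta/\a}<t/2$ on every annulus used, which forces the outer dyadic cutoff to sit at $\delta t^{\a/(2\beta)}$ with $\delta<2^{1-\a/(2\beta)}$); this is exactly what the paper does by establishing the pointwise bounds once on the whole connecting region. Second, for $p\ge p_{\rm c}$ the annular estimates rely on the decay condition~\eqref{eq:decay.condition}, which is only available for $|x|$ large, so the inner piece must be a large ball $B_{R_0}$ (handled by Theorem~\ref{teo-compactos}), not $B_1$ with $A_0=\{1\le|x|\le2\}$ as written. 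Finally, the paper's own proof leaves the sharpness at $p=p_{\rm c}$ and $p=p_*$ implicit; your sketch of the lower bound via the accumulation of $\asymp\log t$ comparable dyadic contributions for $f(x,t)=(1+t)^{-\gamma}\chi_{B_1}(x)$ is the right way to supply it.
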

		
\begin{figure}
  \includegraphics[width=\linewidth]{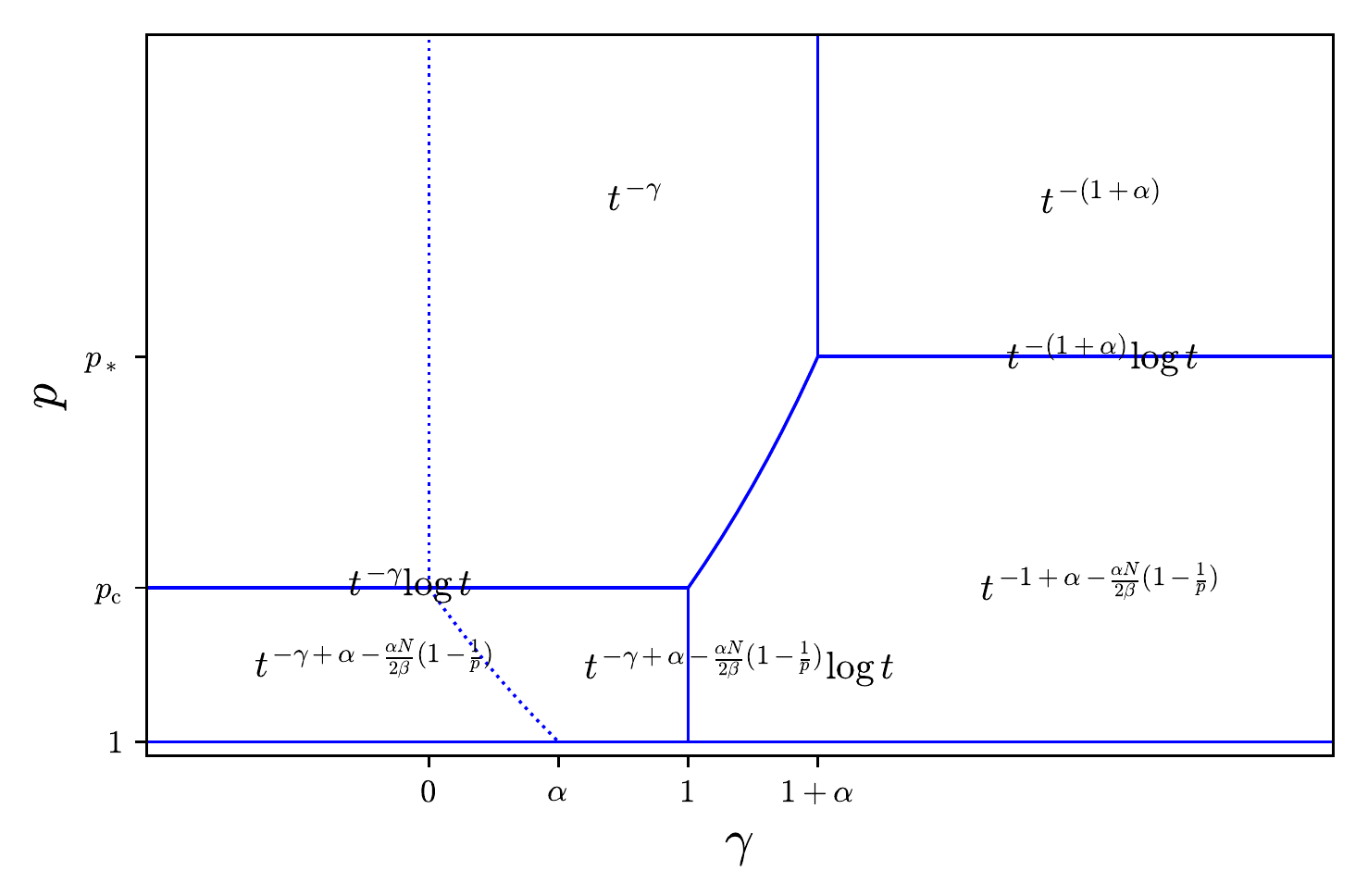}
  \vskip-0.2cm
  \caption{Global decay/growth rates. The dotted line indicates the borderline separating decay from growth.}
  \label{fig:global.rates}
\end{figure}

Notice that the borderline separating decay and growth is $\gamma=0$  only for $p\ge p_{\rm c}$. For $p\in [1,p_{\rm c})$ the frontier is given by
$$
\gamma=\a-\frac{\a N}{2\beta}(1-\frac1p);
$$
see the dotted line in~Figure~\ref{fig:global.rates}. For $p=1$ this corresponds to $\gamma=\alpha$. An informal explanation for this fact can be found in formula~\eqref{eq-formula}.  We are integrating in time, but $Y(x,t)=\partial^{1-\alpha}_t Z(x,t)$. Hence, it is like if we were integrating $\alpha$ times in time. As for the behavior for the borderline $\gamma$, in general there is neither growth nor decay. The exception is the case $p=p_{\rm c}$,  $\gamma=0$, in which there is a slow logarithmic growth.

Another remarkable fact is that the rates depend on $\gamma$ not only in the non-integrable case $\gamma\le1$, which might have been expected, but also in part of the region $\gamma\in [1,1+\alpha)$ if $p$ is supercritical.

\section{Exterior region}\label{sect-exterior}
 \setcounter{equation}{0}

In this section we prove Theorem~\ref{teo-exterior}, which gives the behavior of all $L^p$ norms of the mild solution $u$ to \eqref{eq-f} in exterior regions, $\{(x,t)\in Q:|x|\ge \nu t^{\a/(2\beta)}\}$, $\nu>0$.

\begin{proof}[Proof of Theorem~\ref{teo-exterior}.]
The starting point is Duhamel's type formula~\eqref{eq-formula}. If  $p\in[1,p_{\rm c})$,  then $Y(\cdot, t)$ belongs to $L^p(\R^N)$. Therefore, using  \eqref{eq-hypothesis f} and~\eqref{eq:p.norm.Y},
	\[\begin{aligned}
	\|u(\cdot,t)\|_{L^p(\R^N)}&\le \int_0^t\|Y(\cdot,t-s)\|_{L^p(\R^N)}\|f(\cdot,s)\|_{L^1(\R^N)}\,{\rm d}s\\
	&\le C\int_0^t(t-s)^{\a-1-\frac{N\a}{2\beta}(1-\frac1p)}(1+s)^{-\gamma}\,{\rm d}s\\
	&\le C t^{\a-1-\frac{N\a}{2\beta}(1-\frac1p)}\int_0^{\frac t2}(1+s)^{-\gamma}\,{\rm d}s+ C t^{-\gamma}\int_{\frac t2}^t (t-s)^{\a-1-\frac{N\a}{2\beta}(1-\frac1p)}\,{\rm d}s\\
	&\le  C t^{\a-1-\frac{N\a}{2\beta}(1-\frac1p)}\int_0^{\frac t2}(1+s)^{-\gamma}\,{\rm d}s+ Ct^{\a-\gamma-\frac{N\a}{2\beta}(1-\frac1p)}.
	\end{aligned}
	\]
Integration of $\displaystyle \int_0^{\frac t2}(1+s)^{-\gamma}\,{\rm d}s$ gives the result.
	
We turn now our attention to the case  $p\ge p_{\rm c}$, for which we assume also the decay condition~\eqref{eq:decay.condition}.  We have $|u|\le {\rm I}+{\rm II}$, where
	\[\begin{aligned}
	{\rm I}(x,t)&=\int_0^t\int_{\{|y|<\frac{|x|}2\}}Y(x-y,t-s)|f(y,s)|\,{\rm d}y{\rm d}s,\\
	{\rm II}(x,t)&=\int_0^t\int_{\{|y|>\frac{|x|}2\}}Y(x-y,t-s)|f(y,s)|\,{\rm d}y{\rm d}s,\\
	\end{aligned}\]
	
We start by estimating ${\rm I}$.  Notice that if $|y|<|x|/2$, then  $|x-y|>|x|/2$. Thus, if moreover $|x|\ge\nu t^{\a/(2\beta)}$, there holds that $|x-y|(t-s)^{-\a/(2\beta)}>\nu /2$. Hence, the   bound \eqref{eq:exterior.estimate.Y} yields
	\begin{equation*}
	Y(x-y,t-s)\le C(t-s)^{2\a-1}|x-y|^{-(N+2\beta)}\le C(t-s)^{2\a-1}|x|^{-(N+2\beta)}.
	\end{equation*}
Using also~\eqref{eq-hypothesis f} we arrive at $\displaystyle{\rm I}(x,t)\le C |x|^{-(N+2\beta)}\int_0^t(t-s)^{2\a-1}(1+s)^{-\gamma}\,{\rm d}s$, and
therefore
\[
\begin{aligned}
&\|{\rm I}(\cdot,t)\|_{L^p(\{|x|\ge\nu t^{\a/(2\beta)}\})} \\
&\qquad\le C t^{-\a-\frac{\a N}{2\beta}(1-\frac1p)}\left(\int_0^{\frac t2}(1+s)^{-\gamma}(t-s)^{2\a-1}\,{\rm d}s
+\int_{\frac t2}^t(1+s)^{-\gamma}(t-s)^{2\a-1}\,{\rm d}s\right)\\
&\qquad\le C t^{\a-1-\frac{\a N}{2\beta}(1-\frac1p)}\int_0^{\frac t2}(1+s)^{-\gamma}\,{\rm d}s+C t^{-\gamma-\a-\frac{\a N}{2\beta}(1-\frac1p)}\int_{\frac t2}^t(t-s)^{2\a-1}\,{\rm d}s\\
&\qquad\le C t^{\a-1-\frac{\a N}{2\beta}(1-\frac1p)}\int_0^{\frac t2}(1+s)^{-\gamma}\,{\rm d}s+C t^{\a-\gamma-\frac{\a N}{2\beta}(1-\frac1p)},
\end{aligned}	
\]
and the desired bound for ${\rm I}$ follows.
	
Now we turn to ${\rm II}$. We choose $\ep\in(0,1/2)$. Given $x\in\mathbb{R}^N$, $t>0$,  $s\in(0,t)$, we denote $\mathcal{B}(x,t,s)=\{y\in\mathbb{R}^N: |x-y|<|x|^\ep(t-s)^{(\alpha/(2\beta))(1-\ep)}\}$. Note that for $|x|>\nu t^{\a/(2\beta)}$ and $y\in(\mathcal{B}(x,t,s))^{\rm c}$ we have $|x-y|(t-s)^{-\alpha/(2\beta)}>\nu^\ep$. Therefore, using the estimates~\eqref{eq:global.estimate.Y}--\eqref{eq:exterior.estimate.Y}, we have ${\rm II}\le {\rm II}_1+{\rm II}_2$, with
		\[\begin{aligned}
		{\rm II}_1(x,t)&= C\int_0^t\int_{\{|y|>\frac{|x|}{2}\}\cap \mathcal{B}(x,t,s)}(t-s)^{-(1+\a)}|x-y|^{4\beta-N}|f(y,s)|\,{\rm d}y{\rm d}s,\\
		{\rm II}_2(x,t)&=C\int_0^t\int_{\{|y|>\frac{|x|}{2}\}\cap (\mathcal{B}(x,t,s))^{\rm c}}(t-s)^{2\a-1}|x-y|^{-(N+2\beta)}|f(y,s)|\,{\rm d}y{\rm d}s.
		\end{aligned}\]
				We have, using the decay condition~\eqref{eq:decay.condition},
		\[\begin{aligned}
		{\rm II}_1(x,t)&\le C\int_0^t\int_{\{|y|>\frac{|x|}{2}\}\cap \mathcal{B}(x,t,s)}|y|^{-N}(1+s)^{-\gamma}(t-s)^{-(1+\a)}|x-y|^{4\beta-N}\,{\rm d}y{\rm d}s\\
		&\le C |x|^{-N}\int_0^t(1+s)^{-\gamma}(t-s)^{-(1+\a)}\int_{\mathcal{B}(x,t,s)}|x-y|^{4\beta-N}\,{\rm d}y{\rm d}s\\
		&=C|x|^{4\beta\ep-N}\int_0^t(1+s)^{-\gamma}(t-s)^{\a(1-2\ep)-1}\,{\rm d}s.
		\end{aligned}
\]
Since $p\ge p_{\rm c}$ and $\ep\in(0,1/2)$, we have $(N-4\beta\ep)p>N$, and hence
\[
\begin{aligned}
\| {\rm II}_1(\cdot,t)\|_{L^p(\{|x|>\nu t^{\a/(2\beta)}\})}
&= C t^{2\ep\a-\frac{\a N}{2\beta}(1-\frac1p)}\int_0^t(1+s)^{-\gamma}(t-s)^{\a(1-2\ep)-1}\,{\rm ds}\\
&\le  C t^{\a-1-\frac{\a N}{2\beta}(1-\frac1p)}\int_0^{\frac t2}(1+s)^{-\gamma}\,{\rm d}s+Ct^{\a-\gamma-\frac{\a N}{2\beta}(1-\frac1p)},
\end{aligned}
\]
and integration gives the  bound in \eqref{eq-bound exterior} for this term.

Finally, since $|x-y|\ge|x|^\ep(t-s)^{(\alpha/(2\beta))(1-\ep)}$ in $(\mathcal{B}(x,t,s))^{\rm c}$, using the condition~\eqref{eq-hypothesis f} on $f$,
\[{\rm II}_2(x,t)\le C|x|^{-(N+2\beta)\ep}\int_0^t(1+s)^{-\gamma}(t-s)^{\a-1-\frac{\a N}{2\beta}+\frac{\a }{2\beta}(N+2\beta)\ep}\,{\rm d}s,\]
so that
\[
\begin{aligned}
\|{\rm II}_2(\cdot,t)\|_{L^p(\{|x|>\nu t^{\a/(2\beta)}\})}
&\le C t^{\frac{\a N}{2\beta p}-\frac{\a}{2\beta}(N+2\beta)\ep}
\int_0^t(1+s)^{-\gamma}(t-s)^{\a-1-\frac{\a N}{2\beta}+\frac{\a }{2\beta}(N+2\beta)\ep}\,{\rm d}s\\
&\le C t^{\a-1-\frac{\a N}{2\beta}(1-\frac1p)}\int_0^{\frac t2}(1+s)^{-\gamma}\,{\rm d}s+ C t^{\a-\gamma-\frac{a N}{2\beta}(1-\frac1p)}.
\end{aligned}
\]
Now, integration gives the  bound.

In order to check that the bound in \eqref{eq-bound exterior} is sharp we choose $f(x,t)=(1+t)^{-\gamma}\chi_{B_1}(x)$. If $t$ is large, $|y|<1$ and $|x|>\nu t^{\a/(2\beta)}$, then $|y|<|x|/2$. Hence, $|x|/2<|x-y|<3|x|/2$, so that, assuming also that $|x|<\mu t^{\a/(2\beta)}$ and $0<s<t/2$,
		\[
		\frac\nu2\le \frac{|x|}{2t^{\frac\a{2\beta}}}\le\frac{|x-y|}{(t-s)^{\frac\a{2\beta}}}\le\frac{3\mu}2\Big(\frac t{t-s}\Big)^{\frac\a{2\beta}}
\le C.
\]
Thus, since the profile $G$ of $Y$ is positive, under these conditions $Y(x-y,t-s)\ge C(t-s)^{\alpha-1-\frac{N\a}{2\beta}}$ for some constant $C>0$, see~\eqref{eq:Y.selfsimilar}, and therefore,
		\[
		u(x,t)\ge C\int_0^{\frac t2}(1+s)^{-\gamma}(t-s)^{\a-1-\frac{\a N}{2\beta}}\,{\rm d}s\ge Ct^{\a-1-\frac{\a N}{2\beta}}\int_0^{\frac t2}(1+s)^{-\gamma}\,{\rm d}s. \]
Thus,
\[
\begin{aligned}
		\|u(\cdot,t)\|_{L^p(\{|x|>\nu t^{\a/(2\beta)}\})}&\ge  \|u(\cdot,t)\|_{L^p(\{\mu >|x|/t^{\a/(2\beta)}>\nu \})}\\
&\ge Ct^{\a-1-\frac{\a N}{2\beta}}|\{\nu t^{\a/(2\beta)}<|x|<\mu t^{\a/(2\beta)}\}\big|^{1/p}\int_0^{\frac t2}(1+s)^{-\gamma}\,{\rm d}s\\
		&=Ct^{\a-1-\frac{\a N}{2\beta}(1-\frac1p)}\int_0^{\frac t2}(1+s)^{-\gamma}\,{\rm d}s,
\end{aligned}
\]
which implies the desired lower bound.		
\end{proof}

\section{Compact regions}\label{sect-compacts}
\setcounter{equation}{0}

In this section we prove Theorem~\ref{teo-compactos}, which gives the large-time behavior of the $L^p$ norms of the mild solution to \eqref{eq-hypothesis f} in compact sets $K$.

\begin{proof}[Proof of Theorem \ref{teo-compactos}]
Let $t\ge 1$.
We have $|u|\le {\rm I}+{\rm II}$, where
	\[\begin{aligned}
	{\rm I}(x,t)&=\int_0^{t-1}\int_{\R^N} Y(x-y,t-s) |f(y,s)|\,{\rm d}y{\rm d}s,\\
	{\rm II}(x,t)&=\int_{t-1}^t\int_{\R^N}  Y(x-y,t-s)|f(y,s)|\,{\rm d}y{\rm d}s.\\
	\end{aligned}\]	
Using the global bound~\eqref{eq:global.estimate.Y} for $Y$ we get
		\[\begin{aligned}
		{\rm I}(x,t)\le&\, C\int_0^{t-1}(t-s)^{-(1+\a)}\int_{\R^N}|x-y|^{4\beta- N}|f(y,s)|\,{\rm d}y{\rm d}s\\
		\le&\, C\int_0^{t-1}(t-s)^{-(1+\a)}\int_{\{|x-y|<1\}}|x-y|^{4\beta-N}|f(y,s)|\,{\rm d}y{\rm d}s\\	
&+		C\int_0^{t-1}(t-s)^{-(1+\a)}\int_{\{|x-y|>1\}}{|f(y,s)|}\,{\rm d}y{\rm d}s.
		\end{aligned}
		\]
Let $q=1$ if $p\in[1,p_{\rm c})$, $q>q_{\rm c}(p)$ as in~\eqref{eq:q.integrability.f} if $p\ge p_{\rm c}$.  Let $r$ satisfy $1+\frac1p=\frac1q+\frac1r$. Then $r\in [1,p_{\rm c})$, and in particular $r\in [1, p_*)$. Thus, for all $t\ge 2$ we have
$$
\begin{aligned}
		\|{\rm I}(\cdot,t)\|_{L^p(K)}\le&\, C \int_0^{t-1}(t-s)^{-(1+\a)}\|f(\cdot,s)\|_{L^q(K+B_1)}\Big(\int_{B_1} |z|^{(4\beta-N)r}\,{\rm d}z\Big)^{1/r}\,{\rm d}s\\
		&+C|K|^{1/p}\int_0^{t-1}(t-s)^{-(1+\a)}\|f(\cdot,s)\|_{L^1(\R^N)}\,{\rm d}s\\
		\le&\, C\int_0^{t-1}(1+s)^{-\gamma}(t-s)^{-(1+\a)}\,{\rm d}s\\
		\le&\, Ct^{-(1+\a)}\int_0^{\frac t2}(1+s)^{-\gamma}\,{\rm d}s+Ct^{-\gamma}\int_{\frac t2}^{t-1}(t-s)^{-(1+\a)}\,{\rm d}s\\
		\le&\, Ct^{-\gamma}+C\begin{cases}
		t^{-(\gamma+\a)},&\gamma<1,\\
		t^{-(1+\a)}\log t,&\gamma=1,\\
		t^{-(1+\a)},&\gamma>1,
		\end{cases}
\end{aligned}
$$
which implies that
\begin{equation*}
\label{eq-bound compactos I}
		\|{\rm I}(\cdot,t)\|_{L^p(K)}
\le	C\begin{cases}
		t^{-\gamma},&\gamma<1+\a,\\
		t^{-(1+\a)},&\gamma\ge1+\a.
		\end{cases}
\end{equation*}

In order to bound ${\rm II}$ we take $r\in[1,p_{\rm c})$ as before. Then, using~\eqref{eq:p.norm.Y} we get
\begin{equation*}
\label{eq-bound compactos II}
    \begin{aligned}
    \|{\rm II}(\cdot,t)\|_{L^p(K)}
    &\le C\int_{t-1}^t\|f(\cdot,s)\|_{L^q(\R^N)}(t-s)^{\a-1-\frac{\a N}{2\beta}(1-\frac1r)}\,{\rm d}s\\
    &\le C\int_{t-1}^t(1+s)^{-\gamma}(t-s)^{\a-1-\frac{\a N}{2\beta}(1-\frac1r)}\,{\rm d}s\\
    &\le C t^{-\gamma}\int_0^1\tau^{\a-1-\frac{\a N}{2\beta}(1-\frac1r)}\,{\rm d}\tau=C t^{-\gamma},
    \end{aligned}	
\end{equation*}
which combined with the estimate for ${\rm I}$ yields the result.

In order to prove that estimate~\eqref{eq-bound compactos1}  is sharp we consider $f(x,t)=(1+t)^{-\gamma}\chi_{K+B_1}(x)$, where~$K$ is any compact set with measure different from 0.  We have
\[u(x,t)\ge \int_{0}^{t-1}(1+s)^{-\gamma}\int_{K+B_1}Y(x-y,t-s)\,{\rm d}y{\rm d}s.\]
If $x\in K$ and $|x-y|<1$, then $y\in K+B_1$. Notice that $|x-y|<1$ and $s<t-1$ imply that
$|x-y|(t-s)^{-\a/(2\beta)}\le 1$. Therefore, using the self-similar form~\eqref{eq:Y.selfsimilar} of $Y$ and the bound from below~\eqref{eq-interior estimates G} for the profile $G$,  for all $x\in K$ we have
\[
\begin{aligned}
u(x,t)&\ge C\int_{0}^{t-1}(1+s)^{-\gamma}(t-s)^{-(1+\a)}\int_{\{|x-y|<1\}}|x-y|^{4\beta-N}\,{\rm d}y{\rm d}s	\\
&=C\int_{0}^{t-1}(1+s)^{-\gamma}(t-s)^{-(1+\a)}\,{\rm d}s
\end{aligned}
\]
for some constant $C>0$. Thus, no matter the value of $\gamma$, for all $x\in K$ and $t$ large enough,
$$
u(x,t)\ge C
t^{-\gamma}\int_{\frac t2}^{t-1}(t-s)^{-(1+\a)}\,{\rm d}s=Ct^{-\gamma}\int_{1}^{\frac t2}\tau^{-(1+\a)}\,{\rm d}\tau
\ge C t^{-\gamma},
$$
while if $\gamma>1$, then
$$
u(x,t)\ge C
\int_0^{\frac t2}(1+s)^{-\gamma}(t-s)^{-(1+\a)}\,{\rm d}s
\ge C t^{-(1+\a)}\int_0^{\frac t2}(1+s)^{-\gamma}\,{\rm d}s\ge C t^{-(1+\a)},
$$
so that
\[
\|u(\cdot,t)\|_{L^p(K)}\ge C\begin{cases}
t^{-\gamma},&\gamma\le 1+\a,\\
t^{-(1+\a)},&\gamma\ge 1+\a.
\end{cases}
\]
\end{proof}

 \section{Intermediate scales}\label{sect-intermediate}
\setcounter{equation}{0}
In this section we study the large-time behavior of the $L^p$ norms of the mild solution to \eqref{eq-hypothesis f} in regions where $|x|\asymp g(t)$ with $g(t)\to\infty$ such that $g(t)=o(t^{\a/(2\beta)})$, which is the content of Theorem~\ref{teo-intermediate}.

\begin{proof}[Proof of Theorem~\ref{teo-intermediate}.]
We have $|u|\le {\rm I}+{\rm II}$, where
	\begin{equation}\label{eq-decomposition intermediate1}\begin{aligned}
	{\rm I}(x,t)&=\int_0^t\int_{\{|y|>\frac{|x|}2\}}|f(x-y,t-s)|Y(y,s)\,{\rm d}y{\rm d}s,\\
	{\rm II}(x,t)&=\int_0^t\int_{\{|y|<\frac{|x|}2\}}|f(x-y,t-s)|Y(y,s)\,{\rm d}y{\rm d}s.
	\end{aligned}
	\end{equation}
	
To estimate ${\rm I}$ we decompose it as ${\rm I}={\rm I}_1+{\rm I}_2+{\rm I}_3$, where
	\begin{equation}\label{eq-decomposition intermediate2}\begin{aligned}
	{\rm I}_1(x,t)&= \int_0^{\big(\frac{|x|}2\big)^{2\beta/\a}}\int_{\{|y|>\frac{|x|}2\}}|f(x-y,t-s)|Y(y,s)\,{\rm d}y{\rm d}s,\\
	{\rm I}_2(x,t)&=\int_{\big(\frac{|x|}2\big)^{2\beta/\a}}^{\frac t2}\int_{\{|y|>\frac{|x|}2\}}|f(x-y,t-s)|Y(y,s)\,{\rm d}y{\rm d}s,\\
	{\rm I}_3(x,t)&= \int_{\frac t2}^{t}\int_{\{|y|>\frac{|x|}2\}}|f(x-y,t-s)|Y(y,s)\,{\rm d}y{\rm d}s.
	\end{aligned}
	\end{equation}
If  $0<s^{\a/(2\beta)}<|x|/2<|y|$, then $|y|s^{-\a/(2\beta)}\ge1$. Therefore, using~\eqref{eq:exterior.estimate.Y} and condition~\eqref{eq-hypothesis f}, if $|x|<\mu g(t)$ with $g(t)=o(t^{\a/(2\beta)})$ we have
\begin{equation}
\label{eq:intermediate.I1}
	{\rm I}_1(x,t)\le C  |x|^{-(N+2\beta)}\int_0^{\big(\frac{|x|}2\big)^{2\beta/\a}} s^{2\a-1}(1+t-s)^{-\gamma}\,{\rm d}s\le C |x|^{2\beta-N}t^{-\gamma}.
\end{equation}
Thus,
	\begin{equation*}\label{eq-bound intermediate I1beta}
	\|{\rm I}_1(\cdot,t)\|_{L^p(\{\nu <|x|/g(t)<\mu\})}\le C t^{-\gamma}g(t)^{2\beta-N(1-\frac1p)}.
	\end{equation*}

As for ${\rm I}_2$ and ${\rm I}_3$,  using the global bound~\eqref{eq:global.estimate.Y} for $Y$  and condition~\eqref{eq-hypothesis f},
\begin{equation}
\label{eq:intermediate.I2.I3}
\begin{aligned}
	{\rm I}_2(x,t)&\le C |x|^{4\beta-N}\int_{\big(\frac{|x|}2\big)^{2\beta/\a}}^{\frac t2}(1+t-s)^{-\gamma}s^{-(1+\a)}\,{\rm d}s\\
	&\le C|x|^{4\beta-N}t^{-\gamma}\int_{\big(\frac{|x|}2\big)^{2\beta/\a}}^{\frac t2}s^{-(1+\a)}\,ds\le C|x|^{2\beta-N}t^{-\gamma},\\
{\rm I}_3(x,t)&\le C|x|^{4\beta-N}t^{-(1+\a)}\int_{0}^{\frac t2}(1+r)^{-\gamma}\,{\rm d}r.
	\end{aligned}
\end{equation}
Therefore,
	\begin{equation*}\label{eq-bound intermediate I2}
    \begin{aligned}
	\|{\rm I}_2(\cdot,t)\|_{L^p(\{\nu <|x|/g(t)<\mu\})}&\le C t^{-\gamma} g(t)^{2\beta-N(1-\frac1p)},\\
    \|{\rm I}_3(\cdot,t)\|_{L^p(\{\nu <|x|/g(t)<\mu\})}&\le C g(t)^{4\beta-N(1-\frac1p)}\begin{cases}
	t^{-(\gamma+\a)},&\gamma<1,
\\
	t^{-(1+\a)}\log t, &\gamma=1,\\
	t^{-(1+\a)},&\gamma>1.
	\end{cases}
	\end{aligned}
	\end{equation*}

Now we turn to ${\rm II}$. We decompose it as ${\rm II}\le {\rm II}_1+{\rm II}_2$, where
\begin{equation}
\label{eq-decomposition intermediate3}\begin{aligned}
	{\rm II}_{1}(x,t)&=C\int_0^{\frac t2}\int_{\{|y|<\frac{|x|}2\}}|f(x-y,t-s)|Y(y,s)\,{\rm d}y{\rm d}s,\\	
	{\rm II}_2(x,t)&=C\int_{\frac t2}^t\int_{\{|y|<\frac{|x|}2\}}|f(x-y,t-s)|Y(y,s)\,{\rm d}y{\rm d}s.
	\end{aligned}
\end{equation}

We start with the subcritical case  $p\in[1,p_{\rm c})$.  Notice that if $s^{\a/(2\beta)}<|y|$, then $|y|s^{-\a/(2\beta)}\ge 1$. Moreover,  if  $|x|=o(t^{\a/(2\beta)})$, then $(|x|/2)^{2\beta/\a}=o(t)$, and hence $(|x|/2)^{2\beta/\a}<t/2$ if $t$ is large. Therefore, using the bounds~\eqref{eq:global.estimate.Y} and~\eqref{eq:exterior.estimate.Y}, we have ${\rm II}_1\le {\rm II}_{11}+{\rm II}_{12}$, where
\begin{equation*}
\label{eq:decomposition.intermediate.II}\begin{aligned}
	{\rm II}_{11}(x,t)&=C\int_0^{\big(\frac{|x|}2\big)^{2\beta/\a}}\int_{\{s^{\a/(2\beta)}<|y|<\frac{|x|}2\}}|f(x-y,t-s)|s^{2\a-1}|y|^{-(N+2\beta)}\,{\rm d}y{\rm d}s,\\	
{\rm II}_{12}(x,t)&=C\int_0^{\frac t2}\int_{\{|y|<\min\{\frac{|x|}2,s^{\a/(2\beta)}\}\}}|f(x-y,t-s)|s^{-(1+\a)}|y|^{4\beta-N}\,{\rm d}y{\rm d}s.
%\\
%&=C\int_0^{\big(\frac{|x|}2\big)^{2\beta/\a}}\int_{\{|y|<s^{\a/(2\beta)}\}}|f(x-y,t-s)|s^{-(1+\a)}|y|^{4\beta-N}\,{\rm d}y{\rm d}s\\
%	&+C\int_{\big(\frac{|x|}2\big)^{2\beta/\a}}^{\frac t2}\int_{\{|y|<\frac{|x|}2\}}|f(x-y,t-s)|s^{-(1+\a)}|y|^{4\beta-N}\,{\rm d}y{\rm d}s.
	\end{aligned}
\end{equation*}
Using condition~\eqref{eq-hypothesis f}, and remembering that $g(t)=o(t^{\a/(2\beta)})$,
	\[\begin{aligned}
	\|{\rm II}_{11}&(\cdot,t)\|_{L^p(\{\nu <|x|/g(t)<\mu\})}\\
\le&\,
	C\int_0^{\big(\frac{|x|}2\big)^{2\beta/\a}}\|f(\cdot,t-s)\|_{L^1(\R^N)}s^{2\a-1}\Big(\int_{\{|y|>s^{\a/(2\beta)}\}} |y|^{-(N+2\beta)p}\,{\rm d}y\Big)^{1/p}\,{\rm d}s\\
	\le&\, C\int_0^{\big(\frac{|x|}2\big)^{2\beta/\a}}(1+t-s)^{-\gamma}s^{\a-1-\frac{\a N}{2\beta}(1-\frac1p)}\,{\rm d}s\le C t^{-\gamma}|x|^{2\beta-N(1-\frac1p)}\le  C t^{-\gamma}g(t)^{2\beta-N(1-\frac1p)},\\
\end{aligned}
\]
\[\begin{aligned}
\|{\rm II}_{12}&(\cdot,t)\|_{L^p(\{\nu <|x|/g(t)<\mu\})}\\
\le&\, C\int_0^{\big(\frac{|x|}2\big)^{2\beta/\a}}\|f(\cdot,t-s)\|_{L^1(\R^N)}s^{-(1+\a)}
\Big(\int_{\{|y|<s^{\a/(2\beta)}\}}|y|^{(4\beta-N)p}\,{\rm d}y\Big)^{1/p}\,{\rm d}s
\\&+ C\int_{\big(\frac{|x|}2\big)^{2\beta/\a}}^{\frac t2}\|f(\cdot,t-s)\|_{L^1(\R^N)}s^{-(1+\a)}\Big(\int_{\{|y|<\frac{|x|}2\}} |y|^{(4\beta-N)p}\,{\rm d}y\Big)^{1/p}\,{\rm d}s\\
\le&\, C t^{-\gamma}\int_0^{\big(\frac{|x|}2\big)^{2\beta/\a}}s^{\a-1-\frac{\a N}{2\beta}(1-\frac1p)}\,{\rm d}s+ C|x|^{4\beta-N(1-\frac1p)}t^{-\gamma}\int_{\big(\frac{|x|}2\big)^{2\beta/\a}}^{\frac t2}s^{-(1+\a)}\,{\rm d}s\\
\le&\, C t^{-\gamma}|x|^{2\beta-N(1-\frac1p)}\le C t^{-\gamma}g(t)^{2\beta-N(1-\frac1p)}.
\end{aligned}
\]

On the other hand, from the global bound~\eqref{eq:global.estimate.Y},
$$
	{\rm II}_2(x,t)=C\int_{\frac t2}^t\int_{\{|y|<\frac{|x|}2\}}|f(x-y,t-s)|s^{-(1+\a)}{|y|^{4\beta-N}}\,{\rm d}y{\rm d}s,
$$
and therefore, thanks to condition~\eqref{eq-hypothesis f},
	\begin{equation*}\label{eq-bound intermediate II3}\begin{aligned}
	\|{\rm II}_2&(\cdot,t)\|_{L^p(\{\nu <|x|/g(t)<\mu\})}\\
&\le \int_{\frac t2}^t\|f(\cdot,t-s)\|_{L^1(\R^N)}s^{-(1+\a)}\Big(\int_{\{|y|<\frac{|x|}2\}} |y|^{(4\beta-N)p}\,{\rm d}y\Big)^{1/p}\,{\rm d}s\\
    &\le  C |x|^{4\beta-N(1-\frac1p)}t^{-(1+\a)}\int_0^{\frac t2}(1+r)^{-\gamma}\,{\rm d}r\le Cg(t)^{4\beta-N(1-\frac1p)}\begin{cases}
	t^{-(\gamma+\a)},&\gamma<1,\\
	t^{-(1+\a)}\log t,&\gamma=1,\\
	t^{-(1+\a)},&\gamma>1.
	\end{cases}
	\end{aligned}\end{equation*}

Let now $p\ge p_{\rm c}$. Since $|x-y|\ge|x|/2\ge \nu g(t)/2\to \infty$ as $t\to\infty$, then, thanks to assumption~\eqref{eq:decay.condition}, we have that $|f(x-y,t-s)|\le C|x|^{-N}(1+t-s)^{-\gamma}$ for all $t$ large. Hence,
	\[\begin{aligned}
	{\rm II}_{11}(x,t)
	\le&\, C|x|^{-N}\int_0^{\big(\frac{|x|}2\big)^{2\beta/\a}}(1+t-s)^{-\gamma}s^{2\a-1}\int_{\{|y|>s^{\a/(2\beta)}\}}|y|^{-(N+2\beta)}\,{\rm d}y{\rm d}s\\
	=&\,C|x|^{-N}\int_0^{\big(\frac{|x|}2\big)^{2\beta/\a}}(1+t-s)^{-\gamma}s^{\a-1}\,{\rm d}s\\
	\le&\, C |x|^{-N}t^{-\gamma}\int_0^{\big(\frac{|x|}2\big)^{2\beta/\a}}s^{\a-1}\,{\rm d}s=C|x|^{2\beta-N}t^{-\gamma},\\
	{\rm II}_{12}(x,t)\le&\, C |x|^{-N}\Big(\int_0^{\big(\frac{|x|}2\big)^{2\beta/\a}}(1+t-s)^{-\gamma}s^{-(1+\a)}\int_{\{|y|<s^{\a/(2\beta)}\}}|y|^{4\beta-N}\,{\rm d}y{\rm d}s\\
&+\int_{\big(\frac{|x|}2\big)^{2\beta/\a}}^{\frac t2}
	(1+t-s)^{-\gamma}s^{-(1+\a)}\int_{\{|y|<\frac{|x|}2\}}|y|^{4\beta-N}\,{\rm d}y{\rm d}s\Big)\\
	\le&\, C|x|^{-N} t^{-\gamma}\int_0^{\big(\frac{|x|}2\big)^{2\beta/\a}}s^{\a-1}\,{\rm d}s+
	C|x|^{4\beta-N}t^{-\gamma}\int_{\big(\frac{|x|}2\big)^{2\beta/\a}}^{\frac t2}s^{-(1+\a)}\,{\rm d}s\le C|x|^{2\beta-N}t^{-\gamma}.
	\end{aligned}\]	
Therefore,	$\|{\rm II}_{1}(\cdot,t)\|_{L^p(\{\nu <|x|/g(t)<\mu\})}\le 	C t^{-\gamma}g(t)^{2\beta-N(1-\frac1p)}$
also when $p\ge p_{\rm c}$.

As for ${\rm II}_2$, also when $p\ge p_{\rm c}$, since $|y|<|x|/2$ implies $|x-y|>|x|/2$, using the global estimate~\eqref{eq:global.estimate.Y} and the decay condition~\eqref{eq:decay.condition},
		\[\begin{aligned}
	{\rm II}_2(x,t)&\le C |x|^{-N}\int_{\frac t2}^t(1+t-s)^{-\gamma} s^{-(1+\a)}\int_{\{|y|<\frac{|x|}2\}}|y|^{4\beta-N}\,{\rm d}y{\rm d}s\\
	&\le C|x|^{4\beta-N}t^{-(1+\a)}\int_0^{\frac t2}(1+r)^{-\gamma}\,{\rm d}r\le C|x|^{4\beta-N}\begin{cases}
	t^{-(\gamma+\a)},&\gamma<1,\\
	t^{-(1+\a)}\log t,&\gamma=1,\\
	t^{-(1+\a)},&\gamma>1,
	\end{cases}
	\end{aligned}
	\]
and hence
	\begin{equation*}\label{eq-bound II2}
		\|{\rm II}_2(\cdot,t)\|_{L^p(\{\nu <|x|/g(t)<\mu\})}\le C g(t)^{4\beta -N(1-\frac1p)}	\begin{cases}	
t^{-(\gamma+\a)},&\gamma<1,\\
t^{-(1+\a)}\log t,&\gamma=1,\\
t^{-(1+\a)},&\gamma>1.
	\end{cases}
	\end{equation*}	
	
Estimate \eqref{eq-bound intermediate} follows from the above bounds and the fact that $g(t)=o(t^{\a/(2\beta)})$.
	
To end the proof we have to check that \eqref{eq-bound intermediate} is sharp. To this aim we take $f(x,t)=(1+t)^{-\gamma}\chi_{B_1}(x)$.
Let $t$ be large enough so  that $g(t)>2/\nu$.  If $\nu g(t)<|x|<\mu g(t)$ and $|x-y|<1$,
\begin{equation}
\label{eq:optimality.intermediate}\frac\nu2 g(t)<\frac{|x|}2<|x|-\frac\nu2 g(t)<|x|-1<|y|<|x|+1<|x|+\frac\nu 2 g(t)<2|x|<2\mu g(t).
\end{equation}
Under these assumptions, if $s\in (0,(\nu g(t)/2)^{2\beta/\alpha})$, then $|y|s^{-\alpha/(2\beta)}\ge |y|/(\nu g(t)/2)\ge 1$. Therefore, if $g(t)=o(t^{\alpha/(2\beta)})$, using~\eqref{eq:Y.selfsimilar} and the estimates from below in~\eqref{eq-exterior estimates G1}--\eqref{eq-exterior estimates Gbeta}, and performing the change of variables $s=r g(t)^{2\beta/\alpha}$, we arrive at
\[\begin{aligned}
u(x,t)&\ge \int_0^{\big(\frac\nu2 g(t)\big)^{2\beta/\a}}\int_{\{|x-y|<1\}}(1+t-s)^{-\gamma}Y(y,s)\,{\rm d}y{\rm d}s\\
&\ge C t^{-\gamma} \int_0^{\big(\frac\nu2 g(t)\big)^{2\beta/\a}}s^{\a-1-\frac{\a N}{2\beta}}{\rm e}^{-c(g(t)s^{-\a/(2\beta)})^{\frac2{2-\a}}}\,{\rm d}s\\
&=C t^{-\gamma}g(t)^{\frac{2\beta}\a(\a-\frac{\a N}{2\beta})}\int_0^{\big(\frac\nu2\big)^{2\beta/\a}}r^{\a-1-\frac{\a N}{2\beta}}{\rm e}^{-cr^{-\frac\a{\beta(2-\a)}}}\,{\rm d}r=C t^{-\gamma}g(t)^{2\beta-N},
\end{aligned}
\]
Therefore,
\begin{equation}\label{eq-bound below intermediate 1}
	\|u(x,t)\|_{L^p(\{\nu <|x|/g(t)<\mu\})}\ge
	C t^{-\gamma} g(t)^{2\beta-N(1-\frac1p)}.
	\end{equation}
On the other hand, under the assumptions leading to \eqref{eq:optimality.intermediate},  if moreover $s\in(t/2,t)$ and  $t$ is large enough, we have $|y|<2\mu g(t)<(t/2)^{\a/(2\beta)}<s^{\a/(2\beta)}$.
Thus, using the estimate from below in~\eqref{eq-interior estimates G},
	\[\begin{aligned}
	u(x,t)&\ge C\int_{\frac t2}^t\int_{\{|x-y|<1\}}(1+t-s)^{-\gamma}s^{-(1+\a)} |y|^{4\beta-N}\,{\rm d}y{\rm d}s\\
	&\ge C t^{-(1+\a)}g(t)^{4\beta-N}\int_{\frac t2}^t(1+t-s)^{-\gamma}\,ds\\&=C t^{-(1+\a)}g(t)^{4\beta-N}\int_0^{\frac t2}(1+r)^{-\gamma}\,{\rm d}r\ge Cg(t)^{4\beta-N}\begin{cases}
	t^{-(\gamma+\a)},&\gamma<1,\\
	t^{-(1+\a)}\log t,&\gamma=1,\\
	t^{-(1+\a)},&\gamma>1.
	\end{cases}
	\end{aligned}\]
	Hence,
	\begin{equation}\label{eq-bound below intermediate 2}	\|u(\cdot,t)\|_{L^p(\{\nu <|x|/g(t)<\mu\})}\ge Cg(t)^{4\beta-N(1-\frac1p)}\begin{cases}
	t^{-(\gamma+\a)},&\gamma<1,\\
	t^{-(1+\a)}\log t,&\gamma=1,\\
	t^{-(1+\a)},&\gamma>1.
	\end{cases}
	\end{equation}
Estimates \eqref{eq-bound below intermediate 1}--\eqref{eq-bound below intermediate 2} show that \eqref{eq-bound intermediate} is sharp.
\end{proof}

 \section{Estimates in $\R^N$}\label{sect-conexion}
\setcounter{equation}{0}
In this section we establish the behavior of the \emph{global} $L^p(\R^N)$ norms of the mild solution to~\eqref{eq-f}, Theorem~\ref{teo-conexion}.

\begin{proof}[Proof of Theorem~\ref{teo-conexion}.]
	Due to  the results of theorems \ref{teo-exterior} and \ref{teo-compactos}, it is enough to show that the estimates are true in some region of the form $\{R\le |x|\le \delta t^{\a/(2\beta)}\}$ with  $R,\delta>0$.

We have $|u|\le{\rm I}+{\rm II}$, with ${\rm I}$ and ${\rm II}$ as in~\eqref{eq-decomposition intermediate1}. The term ${\rm I}$ is further decomposed as ${\rm I}={\rm I}_1+{\rm I}_2+{\rm I}_3$, with ${\rm I}_j$, $j\in\{1,2,3\}$ as in~\eqref{eq-decomposition intermediate2}.
Since $|x|<\delta t^{\a/(2\beta)}$ in the region we are interested in, taking  $\delta\in (0, 2^{1-\frac\a{2\beta}})$, then $(|x|/2)^{2\beta/\a}<t/2$. Therefore, reasoning as in Section~\ref{sect-intermediate},  we obtain~\eqref{eq:intermediate.I1}--\eqref{eq:intermediate.I2.I3}, from where
$$
\begin{aligned}
\|{\rm I}_j(\cdot,t)\|_{L^p(\{R<|x|<\delta t^{\a/(2\beta)}\})}&\le C
\begin{cases}
t^{\a-\gamma-\frac{\a N}{2\beta}(1-\frac1p)},&p\in [1,p_{\rm c}),\\
t^{-\gamma}\log t,	&p=p_{\rm c},\hskip2cm j\in\{1,2\},\\
t^{-\gamma},&p> p_{\rm c},
\end{cases}\\[0.3cm]
\|{\rm I}_3(\cdot,t)\|_{L^p(\{R<|x|<\delta t^{\a/(2\beta)}\})}
&\le C\begin{cases}
t^{\a-\gamma-\frac{\a N}{2\beta}(1-\frac1p)},&\gamma<1,\\
t^{\a-1-\frac{\a N}{2\beta}(1-\frac1p)}\log t,&\gamma=1,\hskip1cm p\in [1,p_*),\\
t^{\a-1-\frac{\a N}{2\beta}(1-\frac1p)},&\gamma>1,\\[0.3cm]
t^{-(\gamma+\a)}\log t,&\gamma<1,\\
t^{-(1+\a)}(\log t)^2,&\gamma=1,\hskip1cm p=p_*,\\
t^{-(1+\a)}\log t,&\gamma>1,\\[0.3cm]
t^{-(\gamma+\a)},&\gamma<1,\\
t^{-(1+\a)}\log t,&\gamma=1,\hskip1cm p>p_*,\\
t^{-(1+\a)},&\gamma>1.
\end{cases}
\end{aligned}
$$	
We conclude that	
\begin{equation*}
\label{eq-global bound I}
\|{\rm I}(\cdot,t)\|_{L^p(\{R<|x|<\delta t^{\a/2\beta}\})}\le C\begin{cases}
	t^{\a-\gamma-\frac{\a N}{2\beta}(1-\frac1p)},&\gamma<1,\\
	t^{\a-1-\frac{\a N}{2\beta}(1-\frac1p)}\log t,&\gamma=1,\hskip3.4cm p\in[1,p_{\rm c}),\\
	t^{\a-1-\frac{\a N}{2\beta}(1-\frac1p)},&\gamma>1,\\[0.3cm]	
	t^{-\gamma}\log t,&\gamma\le1,\\[-0.3cm]
&\hskip4.5cm p=p_{\rm c},\\[-0.3cm]
	t^{-1},&\gamma>1,\\[0.3cm]	
    t^{-\gamma}	&\gamma\le1-\a+\frac{\a N}{2\beta}(1-\frac1p),\\[-0.3cm]
&\hskip4.5cm p\in (p_{\rm c},p_*),\\[-0.3cm]
    t^{\a-1-\frac{\a N}{2\beta}(1-\frac1p)}	&\gamma\ge 1-\a+\frac{\a N}{2\beta}(1-\frac1p),\\[0.3cm]
	t^{-\gamma},&\gamma<1+\a,\\[-0.3cm]
&\hskip4.5cm p=p_*,\\[-0.3cm]
	t^{-(1+\a)}\log t,&\gamma\ge1+\a,\\[0.3cm]	
	t^{-\gamma},&\gamma<1+\a,\\[-0.3cm]
&\hskip4.5cm p>p_*,\\[-0.3cm]  	
	t^{-(1+\a)},&\gamma\ge1+\a.
	\end{cases}
	\end{equation*}
	
To analyze ${\rm II}$ we decompose it as ${\rm II}={\rm II}_1+{\rm II}_2$, where ${\rm II}_1$ and ${\rm II}_2$ are as in \eqref{eq-decomposition intermediate3}.	 We start with the subcritical case $p\in [1,p_{\rm c})$. Using~\eqref{eq-hypothesis f} and~\eqref{eq:p.norm.Y},
$$
\begin{aligned}
\|{\rm II}_1(\cdot,t)\|_{L^p(\{R<|x|<\delta t^{\a/(2\beta)}\})}
    &\le C \int_0^{\frac t2}(1+t-s)^{-\gamma}\|Y(\cdot,s)\|_{L^p(\R^N)}\,{\rm d}s\\
    &\le C t^{-\gamma}\int_0^{\frac t2}s^{\a-1-\frac{\a N}{2\beta}(1-\frac1p)}\,{\rm d}s= C t^{\a-\gamma-\frac{\a N}{2\beta}(1-\frac1p)},
\end{aligned}
$$

$$
\begin{aligned}
\|{\rm II}_2(\cdot,t)\|_{L^p(\{R<|x|<\delta t^{\a/(2\beta)}\})}
    &\le C \int_{\frac t2}^t(1+t-s)^{-\gamma}\|Y(\cdot,s)\|_{L^p(\R^N)}\,{\rm d}s\\
    &\le C t^{\a-1-\frac{\a N}{2\beta}(1-\frac1p)} \int_0^{\frac t2}(1+r)^{-\gamma}\,{\rm d}r\\
    &\le C
        \begin{cases}
		t^{\a-\gamma-\frac{\a N}{2\beta}(1-\frac1p)},&\gamma<1,\\
		t^{\a-1-\frac{\a N}{2\beta}(1-\frac1p)}\log t,&\gamma=1,\\
				t^{\a-1-\frac{\a N}{2\beta}(1-\frac1p)},&\gamma>1.				
		\end{cases}
\end{aligned}
$$

Let now $p\ge p_{\rm c}$. If $|y|<|x|/2$ and $|x|>R$, then $|x-y|\ge|x|/2\ge R/2$. Hence, taking $R$ large enough so that~\eqref{eq:decay.condition} holds outside $B_{R/2}$, we have that $|f(x-y,t-s)|\le C|x|^{-N}(1+t-s)^{-\gamma}$.
Hence, reasoning as in Section~\ref{sect-intermediate},
\[
{\rm II}_{1}(x,t)\le C|x|^{2\beta-N}t^{-\gamma},\qquad
{\rm II}_2(x,t)\le C|x|^{4\beta-N}\begin{cases}
	t^{-(\gamma+\a)},&\gamma<1,\\
	t^{-(1+\a)}\log t,&\gamma=1,\\
	t^{-(1+\a)},&\gamma>1,
	\end{cases}
\]	
and we get,
$$
\begin{aligned}
\|{\rm II}_{1}(\cdot,t)\|_{L^p(\{R<|x|<\delta t^{\a/(2\beta)}\})}&\le C \begin{cases}
	t^{-\gamma}\log t,&p=p_{\rm c},\\
	t^{-\gamma},&p>p_{\rm c},
	\end{cases}
\end{aligned}
$$
$$
\begin{aligned}	
		\|{\rm II}_2(\cdot,t)\|_{L^p(\{R<|x|<\delta t^{\a/(2\beta)}\})}&\le C	\begin{cases}	
t^{\a-\gamma-\frac{\a N}{2\beta}\big(1-\frac1p\big)},&\gamma<1,\\
t^{\a-1-\frac{\a N}{2\beta}\big(1-\frac1p\big)}\log t,&\gamma=1,\hskip1cm p\in[p_{\rm c},p_*),\\
t^{\a-1-\frac{\a N}{2\beta}\big(1-\frac1p\big)},&\gamma>1,\\[0.3cm]
	t^{-(\gamma+\a)}\log t,&\gamma<1,\\
	t^{-(1+\a)}(\log t)^2,&\gamma=1,\hskip1cm p=p_*,\\
	t^{-(1+\a)}\log t,&\gamma>1,\\[0.3cm]
	t^{-(\gamma+\a)},&\gamma<1,\\
	t^{-(1+\a)}\log t,&\gamma=1,\hskip1cm p>p_*,\\
	t^{-(1+\a)},&\gamma>1.
	\end{cases}
\end{aligned}
$$

The above estimates together with theorems~\ref{teo-exterior} and \ref{teo-compactos} yield the result.
\end{proof}		

\medskip

\noindent\textbf{Acknowledgments. } This project has received funding from the European Union's Horizon 2020 research and innovation programme under the Marie Sklodowska-Curie grant agreement No.\,777822. C.\,Cort\'azar supported by  FONDECYT grant 1190102 (Chile). F.\,Quir\'os supported by the Spanish Ministry of Economy, through  project  MTM2017-87596-P and under the ICMAT–Severo Ochoa grant CEX2019-000904-S.
	 N.\,Wolanski supported by
CONICET PIP625, Res. 960/12, ANPCyT PICT-2012-0153, UBACYT X117 and MathAmSud 13MATH03 (Argentina).

\end{document}